\newtheorem{theorem}{\hspace*{\parindent}Theorem}
\newtheorem{lemma}{\hspace*{\parindent}Lemma}
\newtheorem{prop}{\hspace*{\parindent}Proposition}
\newcommand{\genbinom}[3]{%
  \genfrac{[}{]}{0pt}{#1}{#2}{#3}%
  }
\newcommand{\qbinom}[2]{\genbinom{}{#1}{#2}}
\def\N{\mathbb{N}}
\def\Z{\mathbb{Z}}
\def\a{\mathbf{a}}
\def\b{\mathbf{b}}
\def\m{\mathbf{m}}
\def\n{\mathbf{n}}
\newcommand{\res}{\mathop{\rm res}}
\title{A new identity for the sum of products of generalized basic hypergeometric functions}
\author{S.I.\:Kalmykov}
\address{School of mathematical sciences, Shanghai Jiao Tong University, 800 Dongchuan RD, Shanghai 200240, China}
\email{kalmykovsergei@sjtu.edu.cn}
\thanks{First author supported by NSFC grant 11901384}
\author{ D.\:Karp}
\address{Faculty of Mathematics and Statistics, Ton Duc Thang University, 19 Ngyuen Huu Tho Street, Ho Chi Minh City, Vietnam}
\email[Corresponding author]{dmitriibkarp@tdtu.edu.vn}
\author{A.\:Kuznetsov}
\address{Department of Mathematics and Statistics, York University,
Toronto, Ontario, M3J 1P3
Canada}
\email{akuznets@yorku.ca}
\thanks{Research of A.K. supported by the Natural Sciences and Engineering Research Council of Canada.}
\date{\today}
\begin{document}
\maketitle

 \begin{center}
  \emph{To the memory of Richard Askey}
\end{center}

\begin{abstract}
We prove a duality relation for generalized basic hypergeometric functions. It forms a $q$-extension of a recent result of the second and the third named authors and  generalizes both a $q$-hypergeometric identity due to the third named author (jointly with Feng and Yang) and a recent identity for the Heine's  ${}_2\phi_{1}$ function due to Suzuki. We further explore various consequences of our identity leading to several presumably new multi-term relations for both terminating and non-terminating generalized basic hypergeometric series.  Moreover, we give confluent versions of our results and furnish a number of explicit examples.
\end{abstract}

\bigskip

Keywords: \emph{basic hypergeometric function, basic hypergeometric identity, duality relation, residue theorem}

\bigskip

MSC2010: 33D15

\bigskip

\section{Introduction}

Basic hypergeometric series are nearly as old as the standard ones with certain particular cases considered already by Euler and Jacobi. The general case was introduced by Heine in 1846 about 30 years after Gauss presented the ordinary hypergeometric series.  Naturally, over these 174 years of history, a huge number of identities in the form of transformation and summation formulas were discovered with deep connections to number theory, orthogonal polynomials, mathematical physics and many other fields.  The standard reference is the book by Gasper and Rahman \cite{Gr}. Another comprehensive treatment can be found in \cite{Ernst}, while more accessible introduction is in \cite{KacCheung}. A nice collection \cite{Frontiers} reflects many aspects of more modern developments.

The purpose of this paper is to present a $q$-analogue of the ordinary hypergeometric identity discovered recently in \cite{KKAMS} by the second and the third named authors.  This identity reduces a sum of products of the generalized hypergeometric functions to a Laurent polynomial (with explicit lower and upper degrees) times a power of $(1-z)$.   Particular cases of it were discovered previously by Ebisu \cite{Ebisu} (as an ingredient of his calculation of the coefficients of the contiguous relations for the Gauss hypergeometric function ${}_2F_{1}$),  by the third named author jointly with Feng and Yang in \cite{FengKuznYang} and by the first and second named authors in \cite{KalmKarp}.  Another identity of this type was deduced by Beukers and Jouhet in \cite{BJ}, but only a particular case of their formula reduces to our identity from \cite{KKAMS}, while in their full generality these results are independent.  The work by Feng, Kuznetsov and Yang also a contains a $q$-extension \cite[Theorem~2]{FengKuznYang}.  More recently, Yamaguchi \cite{Yamaguchi} extended the results of Ebisu to the $q$-case by calculating the coefficients of contiguous relations for the basic hypergeometric series ${}_2\phi_{1}$ (see definition \eqref{eq:phi} below).  A by-product of her result is an identity for a quadratic form composed of ${}_2\phi_{1}$ functions \cite[Lemma~3]{Yamaguchi}.  She further generalized this identity in \cite{Suzuki}. Our identity proved in Theorem~\ref{thm:main} below generalizes both the Feng, Kuznetsov and Yang formula obtained by setting $n_i=0$, $i=1,\ldots,r$, and the most general Suzuki's formula \cite[Lemma~15]{Suzuki} obtained by setting $r=2$, applying Heine's transformation and renaming parameters, as explained in the remark following the proof of Theorem~\ref{thm:main}.  Our main result leads to identities for finite sums of the terminating and non-terminating generalized basic hypergeometric functions.  We further present a confluent form of our formula in Theorem~\ref{thm:confluent}.  Finally, let us remark that another apparently unrelated identity for a quadratic form of the basic hypergeometric series was established in 2015 in \cite[Theorem~1.1]{GITZ}.

\section{Main results}
Before we turn to our results let us introduce some notation and definitions.
We will use the definition of the $q$-shifted factorial valid for both positive and negative values of the index (\cite[(1.2.15) and (1.2.28)]{Gr}):
\begin{equation}\label{eq:q-shifted}
(a;q)_0=1,~~~~(a;q)_n=\prod\limits_{k=0}^{n-1}(1-aq^k), \ \ (a;q)_{-n} = \prod_{k=1}^{n} \dfrac{1}{(1-a/q^k)},~~n\in\N.
\end{equation}
It is easy to  see that
\begin{align}\label{eq:qPochhammerId}
(q^a;q)_n&=\frac{(-1)^nq^{an+n(n-1)/2}}{(q^{1-a};q)_{-n}}
\\ \nonumber
&=(-1)^nq^{an+n(n-1)/2}(q^{-a};1/q)_{n}=(-1)^n(q^{1-a-n};q)_{n}q^{an+n(n-1)/2}.
\end{align}
This definition works for any complex $a$ and $q$. If we restrict $q$ to $|q|<1$, we can also define
$$
(a;q)_{\infty}=\lim\limits_{n\to\infty}(a;q)_n,
$$
where the limit can be shown to exist as a finite number for all complex $a$.  The $q$-gamma function is given by  \cite[(1.10.1)]{Gr}, \cite[(21.16)]{KacCheung}
\begin{equation}\label{eq:q-Gamma-def}
\Gamma_q(z)=(1-q)^{1-z}\frac{(q;q)_{\infty}}{(q^z;q)_{\infty}}
\end{equation}
for $|q|<1$ and all complex $z$ such that $q^{z+k}\ne1$ for all $k\in\N_0$.  Comparing this definition with \eqref{eq:q-shifted} we immediately  see that
\begin{equation}\label{eq:q-Gamma-q-Poch}
\frac{\Gamma_q(z+k)}{\Gamma_q(z)}=\frac{(q^z;q)_{k}}{(1-q)^k}
\end{equation}
for both positive and negative $k$.

Suppose $r,s$ are positive integers, $\a =(a_1,\dots, a_r)\in\mathbb{C}^r$, $\b=(b_1,\dots, b_{s})\in \mathbb{C}^{s}$.
The symbol $\a_{[j]}$ (where $1\le j \le r$) stands for the vector $\a$ with $j$-th component omitted:
$$
\a_{[j]}=(a_1,\dots,a_{j-1},a_{j+1},\dots,a_r),
$$
and, by definition,
$$
\a+\beta= (a_1+\beta,\dots, a_r+\beta).
$$
For $\n=(n_1,\dots, n_r)\in \mathbb{Z}^r$ we use the following abbreviations for the products:
$$
(\a)_{\n} = (a_1;q)_{n_1}(a_2;q)_{n_2}\cdots (a_r;q)_{n_r}~\text{and}~\Gamma_q(\a)=\Gamma_q(a_1)\cdots\Gamma_q(a_r).
$$
We will further write $q^{\a}=(q^{a_1},\dots,q^{a_r})$.  The basic hypergeometric function is defined as follows   \cite[formula (1.2.22)]{Gr}
\begin{equation} \label{eq:phi}
{_r\phi_{s}}\left(\!\begin{array}{c} \a \\ \b   \end{array}\!\vline\,q,z\right)
=\sum_{n=0}^{\infty}\frac{(a_1;q)_n(a_2;q)_n \cdots (a_r;q)_n}{(b_1;q)_n (b_2;q)_n \cdots (b_s;q)_n(q;q)_n} \left[(-1)^n q^{\binom{n}{2}}\right]^{1+s-r}z^n,
\end{equation}
where $r\le{s+1}$, and the series converges for all $z$ if $r\le{s}$ and for $|z|<1$ if $r=s+1$ \cite[section~1.2]{Gr}.

We will also need the version  of the  basic hypergeometric function considered by Bailey \cite[section~8.1]{Bailey} and Slater \cite[(3.2.1.11)]{Slater} which is defined as follows
\begin{equation} \label{eq:phitilde}
{_r\hat{\phi}_{s}}\left(\!\begin{array}{c} \a  \\
\b  \end{array}\!\vline\,q,z\right)
=\sum_{n=0}^{\infty}\frac{(a_1;q)_n (a_2;q)_n \cdots (a_r;q)_n}{(b_1;q)_n (b_2;q)_n \cdots (b_s;q)_n(q;q)_n} z^n.
\end{equation}
Observe  \cite[p.5]{Gr} that the series \eqref{eq:phi} has the property that if we replace $z$ by $z/b_r$ and let $b_r\to\infty$, then the resulting series is again of the form \eqref{eq:phi} with $r$ replaced by $r-1$ (this is known as \emph{the confluence process}). At the same time the Bailey and Slater series  \eqref{eq:phitilde} can be obtained from the $r=s+1$ case of \eqref{eq:phi} by setting some of the parameters equal to zero:
\begin{equation} \label{eq:phitildephi}
{_r\hat{\phi}_{s}}\left(\!\begin{array}{c}a_1, \ldots, a_r \\  b_1, \ldots, b_s \end{array}\!\vline\,q,z\right)
={_{s+1}\phi_{s}}\left(\!\begin{array}{c} a_1,\ldots,a_r,0,\ldots,0\\b_1, \ldots, b_s  \end{array}\!\vline\,q,z\right).
\end{equation}
Note also that in the case $r=s+1$ the functions ${}_r\phi_{s}$ and  ${}_r\hat{\phi}_{s}$ coincide. Since the base $q$ is the same in all formulas in this paper, in what follows we omit it from the notation of the $q$-hypergeometric functions.
Denote by $\qbinom{n}{j}_q$ the standard $q$-binomial coefficient
defined by
$$
\qbinom{n}{j}_q=\frac{(q;q)_{n}}{(q;q)_{j}(q;q)_{n-j}}=\frac{(q^{n-j+1};q)_{j}}{(q;q)_{j}},
$$
see, for instance, \cite[Chapter~7]{KacCheung} or \cite[p.24]{Gr}. Put as customary $(x)_{+}=\max(0,x)$ for any real $x$. Our main result is the following

\begin{theorem}\label{thm:main}
Suppose that $r\ge2$ is an integer, $q$ with $|q|<1$ a complex number,  the vector $\a\in\mathbb{C}^r$ satisfies $a_i-a_j\notin\Z$ for $1\le i < j\le r$ and the vector $\b\in\mathbb{C}^{r}$ is arbitrary; assume further that $\m,\n\in\Z^r$, $t\in\Z$. Then, for the numbers $\beta_k$, given by
\begin{multline}\label{eq:beta-final}
\beta_{k}=\sum\limits_{j=\max(-n_{\max},k-p-(t)_{+}-1)}^{k}(-1)^{k-j}
\sum_{g+h=k-j}\qbinom{p+1}{h}_q\qbinom{(t)_{+}}{g}_q q^{[h(h-1)+g(g-1)]/2}W^h
\\
\times\sum_{i=1}^{r}\frac{q^{a_i(1-t)}(q^{1-\b+a_{i}};q)_{\m+k}}{(q;q)_{k+n_i}(q^{a_{i}-\a_{[i]}};q)_{\n_{[i]}+k+1}}
{_{2r}\phi_{2r-1}}\left(\!\begin{array}{l}q^{-k-n_i},q^{\b-a_i},q^{\a_{[i]}-a_{i}-\n_{[i]}-k}\\q^{\b-a_i-\m-k},q^{1-a_i+\a_{[i]}}\end{array}\!\vline\, q^{N-M+r-1+t}\right),
\end{multline}
where  $W=q^{t+r-1}\prod_{i=1}^r q^{a_i-b_i}$, the following identity holds
\begin{multline}\label{eq:MainIdentityNew}
\sum_{i=1}^{r} q^{a_i(1-t)}\frac{(q^{1-\b+a_i};q)_{\m-n_i}z^{-n_i}}{(q^{a_i-\a_{[i]}};q)_{\n_{[i]}-n_i+1}}
{_r\phi_{r-1}}\left(\!\begin{array}{l}q^{\b-a_i}\\q^{1+\a_{[i]}-a_i}\end{array}\!\vline\, Wz\right)
{_r\phi_{r-1}}\left(\!\begin{array}{l}q^{1-\b+a_i+\m-n_i}\\q^{1-\a_{[i]}+a_i+\n_{[i]}-n_i}\end{array}\!\vline\,z\right)
\\
=\frac{1}{(Wz;q)_{p+1}(z;q)_{(t)_{+}}}\sum\limits_{k=-n_{max}}^{p+(t)_{+}-m_{min}}\beta_kz^k.
\end{multline}
Here $m_{\min} = \min\limits_{1\le i \le r} (m_i)$, $n_{\max}=\max\limits_{1\le i \le r} (n_i)$, $M=\sum\limits_{i=1}^r m_i$,  $N=\sum\limits_{i=1}^r n_i$ and $p=\max\{-1, M-N-r-t+1\}$.
\end{theorem}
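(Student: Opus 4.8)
The identity is a relation between a sum of products of two ${}_r\phi_{r-1}$ functions on the left and an explicit Laurent polynomial divided by two $q$-shifted factorial factors on the right. The natural strategy, in the spirit of \cite{KKAMS} and \cite{FengKuznYang}, is a residue computation: I would encode the left-hand side as a sum of residues of a single meromorphic kernel and then evaluate the same object by a second, global, argument (residue theorem or partial fractions), obtaining the right-hand side.

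\medskip

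\noindent\emph{Step 1: Build the kernel.} The product of two basic hypergeometric series on the left, together with the prefactor $q^{a_i(1-t)}(q^{1-\b+a_i};q)_{\m-n_i}z^{-n_i}/(q^{a_i-\a_{[i]}};q)_{\n_{[i]}-n_i+1}$, should arise as the residue at a pole located at a point determined by $a_i$ of a function of an auxiliary complex variable $w$ (or of an integer summation index lifted to a contour). Concretely, I expect a kernel of the shape
\begin{equation*}
F(w;z)=\frac{\Gamma_q(w-\a)\,\Gamma_q(\b-w)}{\Gamma_q(\cdots)}\,(\text{power of }z)^{w},
\end{equation*}
engineered so that the simple poles in $w$ at $w=a_i+\Z$ reproduce, after summation over the residue sequence, exactly the two ${}_r\phi_{r-1}$ factors: one factor comes from summing poles in one direction and the other from the shift structure encoded by $\m,\n,t$. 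The hypothesis $a_i-a_j\notin\Z$ guarantees that the poles coming from distinct $a_i$ do not collide, so the residues are simple and the $r$ groups of poles are cleanly separated — this is why that condition is imposed.

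\medskip

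\noindent\emph{Step 2: The two evaluations.} Summing the residues over the poles at $\{a_i+\Z\}_{i=1}^{r}$ produces the left-hand side of \eqref{eq:MainIdentityNew}. For the right-hand side I would show that the \emph{same} kernel has only finitely many other poles — located at the zeros of $(Wz;q)_{p+1}$ and $(z;q)_{(t)_{+}}$ — together with the contribution at infinity; the residue theorem then forces the total sum of all residues to vanish (or to equal an elementary term), which reorganizes the $a_i$-residue sum into $\frac{1}{(Wz;q)_{p+1}(z;q)_{(t)_{+}}}\sum_k\beta_k z^k$. The explicit range $-n_{\max}\le k\le p+(t)_{+}-m_{\min}$ and the factor $(Wz;q)_{p+1}(z;q)_{(t)_{+}}$ should fall out of counting the finitely many ``other'' poles, and $p=\max\{-1,M-N-r-t+1\}$ is exactly the degree bookkeeping that controls the behaviour at $w\to\infty$.

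\medskip

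\noindent\emph{Step 3: Identify the coefficients $\beta_k$.} The coefficients in \eqref{eq:beta-final} should be the explicit residues at those finitely many extra poles. The inner ${}_{2r}\phi_{2r-1}$ with argument $q^{N-M+r-1+t}$ is the value one gets by expanding the kernel's numerator/denominator $q$-Pochhammer factors at such a pole via the reflection-type identity \eqref{eq:qPochhammerId} and the relation \eqref{eq:q-Gamma-q-Poch}; the double sum over $g+h=k-j$ with the $q$-binomial coefficients $\qbinom{p+1}{h}_q\qbinom{(t)_{+}}{g}_q$ and the factor $W^h$ is precisely the $q$-binomial expansion of $(Wz;q)_{p+1}(z;q)_{(t)_{+}}$ when one multiplies the $a_i$-residue sum through by that denominator and collects powers of $z$. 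I would verify this by matching the coefficient of $z^k$ on both sides after clearing denominators, using the $q$-Vandermonde/$q$-Chu–Vandermonde convolution to merge the two $q$-binomial sums.

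\medskip

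\noindent The main obstacle I anticipate is \emph{Step 1}: guessing the correct kernel $F(w;z)$ whose residues simultaneously generate the prefactor, \emph{both} ${}_r\phi_{r-1}$ factors, and the shift data $\m,\n,t$ in the right places. Once the kernel is pinned down, Steps 2 and 3 are a (lengthy but routine) residue count and a $q$-binomial bookkeeping exercise. A secondary technical point is justifying convergence/contour closure so that the residue theorem applies and the contribution at infinity is controlled; for $|q|<1$ and in the terminating regimes this should be manageable, with the non-terminating cases handled by analytic continuation in the parameters.
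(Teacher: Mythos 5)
There is a genuine gap, and it sits exactly where you anticipated trouble: Step 1. No one-variable Barnes-type kernel of the shape you posit is constructed in the paper, and it is doubtful one exists in that form, because the residue of a one-variable kernel at a simple pole is a single number; summing over a pole sequence $w=a_i-j$, $j\in\N_0$, can generate \emph{one} ${}_r\phi_{r-1}$ series, but not the \emph{product} of two — for that you would need a two-variable (double-contour) kernel, and your remark that the second factor comes from ``the shift structure encoded by $\m,\n,t$'' does not supply such a construction. The paper's mechanism is different and much more elementary: it first expands the product of the two ${}_r\phi_{r-1}$'s on the left of \eqref{eq:MainIdentityNew} as a Cauchy product and collects the coefficient $\alpha_k$ of $z^k$; it then observes that $\alpha_k$ is precisely the sum of the residues of the \emph{rational} function $F_k(z)=-z^{-t}(zq^{1-\b};q)_{k+\m}/(zq^{-\a};q)_{k+\n+1}$ at its nonzero poles $z=q^{a_i-j}$ (simple, by $a_i-a_j\notin\Z$), so that the residue theorem needed is just the fact that the finite residues of a rational function sum to its residue at infinity — applied once for every coefficient index $k$, with no contour closure or convergence issues at all. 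Your instinct about the role of $a_i-a_j\notin\Z$ is right, but the residue argument is coefficient-wise in $z$, not global in an auxiliary variable $w$.

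The second, related, gap is your Step 2 claim that the denominator $(Wz;q)_{p+1}(z;q)_{(t)_{+}}$ ``falls out of counting finitely many other poles of the kernel.'' In the paper it has an entirely different origin: the key Lemma shows that $f_k$ expands at $z=\infty$ as $A^kz^{M-N-r}\sum_{l\ge0}Q_l(q^{-k})z^{-l}$ and at $z=0$ as $\sum_{l\ge0}\tilde Q_l(q^k)z^l$, where $Q_l,\tilde Q_l$ are polynomials of degree $l$ with coefficients \emph{independent of} $k$. Hence $\alpha_k=A^kQ_p(q^{-k})+\tilde Q_{t-1}(q^k)$ for $k\ge-m_{\min}$, and summing $\sum_{k\ge-m_{\min}}\alpha_kz^k$ termwise yields finitely many geometric series with ratios $Azq^{-l}$, $0\le l\le p$, and $zq^l$, $0\le l\le t-1$, whose common denominator is exactly $(Wz;q)_{p+1}(z;q)_{(t)_{+}}$; this polynomial-degree structure is also what fixes $p=\max\{-1,M-N-r-t+1\}$ and the range $-n_{\max}\le k\le p+(t)_{+}-m_{\min}$. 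Your Step 3 endgame is essentially correct and matches the paper — the double sum over $g+h=k-j$ in \eqref{eq:beta-final} is indeed the Gauss $q$-binomial expansion of $(Wz;q)_{p+1}(z;q)_{(t)_{+}}$ convolved against the $\alpha_j$, and the ${}_{2r}\phi_{2r-1}$ arises from rewriting the residues $\gamma_{i,j}^{k+n_i}$ via the identities \eqref{eq:qpochhammertricks} — but without the Cauchy-product-plus-rational-residue reduction and the degree lemma, the middle of your argument cannot be completed as written.
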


\begin{proof}
Using the Cauchy product we  expand
\begin{multline}\label{eq:Cauchy_expnd1}
S=\sum_{i=1}^{r}q^{a_i(1-t)}\frac{(q^{1-\b+a_i};q)_{\m-n_j}z^{-n_i}}{(q^{a_i-\a_{[i]}};q)_{\n_{[i]}-n_i+1}}
{_r\phi_{r-1}}\left(\!\begin{array}{l}q^{\b-a_i}\\q^{1+\a_{[i]}-a_i}\end{array}\!\vline\, Wz\right)
{_r\phi_{r-1}}\left(\!\begin{array}{l}q^{1-\b+a_i+\m-n_i}\\q^{1-\a_{[i]}+a_i+\n_{[i]}-n_i}\end{array}\!\vline\,z\right)
\\
=\sum_{i=1}^{r}\frac{(q^{1-\b+a_i};q)_{\m-n_i}z^{-n_i}}{(q^{a_i-\a_{[i]}};q)_{\n_{[i]}-n_i+1}}
\sum_{k=0}^{\infty}z^k\sum_{j=0}^{k}\frac{ q^{a_i(1-t)}(q^{\b-a_i};q)_{j}(q^{1-\b+a_i+\m-n_i};q)_{k-j}W^{j}}
{(q^{1+\a_{[i]}-a_i};q)_{j}(q^{1-\a_{[i]}+a_i+\n_{[i]}-n_i};q)_{k-j}(q;q)_j(q;q)_{k-j}}
\\
=\sum_{i=1}^{r}\sum_{k=0}^{\infty}z^{k-n_{i}}
\sum_{j=0}^{k}\frac{q^{a_i(1-t)}(q^{1-\b+a_i};q)_{\m-n_i}(q^{\b-a_i};q)_{j}(q^{1-\b+a_i+\m-n_i};q)_{k-j}W^{j}}
{(q^{a_i-\a_{[i]}};q)_{\n_{[i]}-n_i+1}(q^{1+\a_{[i]}-a_i};q)_{j}(q^{1-\a_{[i]}+a_i+\n_{[i]}-n_i};q)_{k-j}(q;q)_{j}(q;q)_{k-j}}
\\
=\sum_{i=1}^{r}\sum_{k=0}^{\infty}z^{k-n_i}\sum_{j=0}^{k}\gamma_{i,j}^k =\sum_{i=1}^{r}\sum_{k_i=-n_i}^{\infty}z^{k_i}\sum_{j=0}^{k_i+n_i} \gamma_{i,j}^{k_i+n_i},
\end{multline}
where we denoted
\begin{equation}\label{eq:gamma_ij}
\gamma_{i,j}^k=\frac{q^{a_i(1-t)}(q^{1-\b+a_i};q)_{\m-n_i}(q^{\b-a_i};q)_{j}(q^{1-\b+a_i+\m-n_i};q)_{k-j}W^{j}}
{(q^{a_i-\a_{[i]}};q)_{\n_{[i]}-n_i+1}(q^{1+\a_{[i]}-a_i};q)_{j}(q^{1-\a_{[i]}+a_i+\n_{[i]}-n_i};q)_{k-j}(q;q)_{j}(q;q)_{k-j}}.
\end{equation}
Using \eqref{eq:qPochhammerId} and \eqref{eq:q-Gamma-q-Poch} we obtain  for each term in the numerator on the right-hand side of 
\eqref{eq:gamma_ij}
\begin{align*}
  & (q^{1-b_l+a_i};q)_{m_l-n_i}(q^{b_l-a_i};q)_{j}(q^{1-b_l+a_i+m_l-n_i};q)_{k-j}
    \\
    &=(q^{b_l-a_i};q)_{j}(1-q)^{m_l-n_i+k-j}\dfrac{\Gamma_q(1-b_l+a_i+m_l-n_i)}{\Gamma_q(1-b_l+a_i)}  \dfrac{\Gamma_q(1-b_l+a_i+m_l-n_i+k-j)}{\Gamma_q(1-b_l+a_i+m_l-n_i)}
    \\
    &=(q^{b_l-a_i};q)_{j}(q^{1-b_l+a_i};q)_{m_l-n_i+k-j}
    \\
    &=(-1)^{j}q^{(b_l-a_i)j+j(j-1)/2}(q^{1-b_l+a_i-j};q)_{j}(q^{1-b_l+a_i};q)_{m_l-n_i+k-j}
    \\
    &=(-1)^{j}q^{(b_l-a_i)j+j(j-1)/2}(q^{1-b_l+a_i-j};q)_{m_l-n_i+k}.
\end{align*}
After a similar calculation for the denominator terms in \eqref{eq:gamma_ij}, we arrive at
\begin{equation}\label{eq:gamma_compact}
\gamma_{i,j}^k=\frac{(-1)^{j}q^{j(j-1)/2}q^{a_i(1-t)+tj}(q^{1-\b+a_{i}-j};q)_{\m+k-n_{i}}}{(q^{a_{i}-\a_{[i]}-j};q)_{\n_{[i]}+k-n_{i}+1}(q;q)_{j}(q;q)_{k-j}},
\end{equation}
which is equivalent to
\begin{equation}\label{eq:gamma_compact1}
\gamma_{i,j}^{k+n_{i}}=\frac{(-1)^{j}q^{j(j-1)/2}q^{a_i(1-t)+tj}(q^{1-\b+a_{i}-j};q)_{\m+k}}{(q^{a_{i}-\a_{[i]}-j};q)_{\n_{[i]}+k+1}(q;q)_{j}(q;q)_{k+n_{i}-j}}.
\end{equation}
According to \eqref{eq:qPochhammerId}, we have $(q;q)_{j}=\infty$ for $j<0$ so that also $(q;q)_{k-j}=\infty$ for $j>k$ and thus definition \eqref{eq:gamma_ij} implies that $\gamma_{i,j}^{k+n_i}=0$ for $j<0$ and $j>k+n_i$. These relations extend the definition of $\gamma_{i,j}^{k+n_i}$ to arbitrary $j$, and, in view of this convention, we have by rearranging \eqref{eq:Cauchy_expnd1}:
$$
S=\sum_{k=-n_{\max}}^{\infty}z^k\sum_{i=1}^{r}\sum_{j=0}^{k+n_i}\gamma_{i,j}^{k+n_i}.
$$
Next, note that $k+m_i\ge0$ for all $i=1,\dots,r$ if $k\ge-m_{\min}$. Hence, if $-m_{\min}\le -n_{\max}$, then $k\ge -m_{\min}$ for all terms in the above sum. Otherwise, if $-m_{\min}>-n_{\max}$ we can write
$$
S(z)=\sum_{k=-n_{\max}}^{-m_{\min}-1}\alpha_k z^k+\sum_{k=-m_{\min}}^{\infty}z^k\sum_{i=1}^{r}\sum_{j=0}^{k+n_i} \gamma_{i,j}^{k+n_i}
=\sum_{k=-n_{\max}}^{-m_{\min}-1}\alpha_kz^k +S_1(z),
$$
where
$$
\alpha_k=\sum_{i=1}^{r}\sum_{j=0}^{k+n_i}\gamma_{i,j}^{k+n_i}.
$$

Next, for each $k\in\Z$ define the functions
$$
f_k(z)=- \frac{(zq^{1-\b};q)_{k+\m}}{(zq^{-\a};q)_{k+\n+1}}~~~\text{and}~~~F_k(z)=z^{-t} f_k(z).
$$
According to \eqref{eq:q-shifted}, these functions are rational for all $k\in\Z$.  Further, if  $k\ge-m_{\min}$ the expression
in the numerator is a polynomial and all poles of $f_k$ come from the zeros of the denominator.  If $k+n_i+1>0$ for all $i=1,\ldots,r$, then the poles of $f_k(z)$ are at the points:
$$
(zq^{-\a};q)_{k+\n+1}=0~~\Leftrightarrow~~z=q^{a_{i}-j},~i=1,\ldots,r~\text{and}~j=0,\ldots,k+n_{i}.
$$
Our assumption $a_i-a_j\notin\Z$ for $1\le i < j\le r$ guarantees that all poles of $f_k$ are simple. The poles of $F_k$ include all poles of $f_k$ and a (possibly multiple) pole at $z=0$. Thus, after simple calculation, we obtain
$$
\res\limits_{z=q^{a_{i}-j}}F_k(z)
=\frac{(-1)^{j}q^{j(j-1)/2+a_i(1-t)+tj}q^{a_i}(q^{1-\b+a_{i}-j};q)_{\m+k}}
{(q^{a_{i}-\a_{[i]}-j};q)_{\n_{[i]}+k+1}(q;q)_{j}(q;q)_{k+n_{i}-j}}
=\gamma_{i,j}^{k+n_{i}}.
$$
If $k+n_i+1\le0$ for some values of $k$ and $i$, then $(zq^{-\a};q)_{k+n_i+1}$ has no zeros at the points $z=q^{a_{i}-j}$, $j=k+n_i,\ldots,0$, so that these points do not contribute to the sum of residues of $f_k(z)$.  In view of the above comment on the values of $\gamma_{i,j}^{k+n_{i}}$ for $j<0$, this implies that for  $k\ge-m_{\min}$
$$
\sum\limits_{\text{over all nonzero poles of~}F_k(z)}\res{F_k(z)}=\sum_{i=1}^{r}\sum_{j=0}^{k+n_i}\gamma_{i,j}^{k+n_i},
$$
where only the terms with $k+n_i\ge0$ are non-vanishing. Next, we use the fact that the sum of residues of a rational function at all finite points equals its residue at infinity \cite[(4.1.14)]{AblFok}, which is the coefficient at $z^{-1}$
in the asymptotic expansion
$$
F_{k}(z)=\sum\limits_{j=-\infty}^{M-N-r-t}C_{j}(k)z^{j}.
$$
Hence,
$$
\sum_{i=1}^{r}\sum_{j=0}^{k+n_i}\gamma_{i,j}^{k+n_i}=C_{-1}(k)- \res\limits_{z=0}{F_k(z)}.
$$
Our key observation is the following lemma.
\begin{lemma}\label{lemma1}${}$
\begin{itemize}
\item[(i)]
For $z$ large enough we have
\begin{equation}\label{fk_asymptotics}
f_k(z)=A^k z^{M-N-r} \sum\limits_{l\ge 0}  Q_l(q^{-k}) z^{-l},
\end{equation}
where
$$
A=\prod\limits_{i=1}^r q^{a_i - b_i +m_i-n_i}
$$
and for each $l\ge 0$ the function $Q_l(w)$ is a polynomial of degree $l$ whose coefficients do not depend on $k$.
\item[(ii)]
For $z$ small enough we have
\begin{equation}\label{fk_asymptotics_zero}
f_k(z)=\sum\limits_{l\ge 0}  \tilde Q_l(q^{k}) z^{l},
\end{equation}
where for each $l\ge 0$ the function $\tilde Q_l(w)$ is a polynomial of degree $l$ whose coefficients do not depend on $k$.
\end{itemize}
\end{lemma}
\begin{proof}
Consider first the function
$$
g_k(z;\alpha,\beta)=\frac{(\beta z;q)_k}{(\alpha z;q)_k}.
$$
We compute
$$
g_k(z;\alpha,\beta)=\prod\limits_{j=0}^{k-1}
\frac{1-\beta z q^j}{1-\alpha z q^j}=(\beta/\alpha)^k \prod\limits_{j=0}^{k-1}
\frac{1-\frac{1}{\beta z} q^{-j}}{1-\frac{1}{\alpha z} q^{-j}}
$$
and
\begin{align*}
\ln(g_k(z;\alpha,\beta))&=
k \ln(\beta/\alpha) + \sum\limits_{j=0}^{k-1} \Big( \ln(1-\beta^{-1} z^{-1} q^{-j}) - \ln(1-\alpha^{-1} z^{-1} q^{-j}) \Big)\\
&=
k \ln(\beta/\alpha) - \sum\limits_{j=0}^{k-1} \sum\limits_{l\ge 0}
\frac{1}{l z^l q^{jl}} \Big( \beta^{-l}-\alpha^{-l} \Big)
\\&=k \ln(\beta/\alpha) - \sum\limits_{l\ge 0} \frac{1}{l z^l}
\times \Big( \beta^{-l}-\alpha^{-l} \Big)
\sum\limits_{j=0}^{k-1} q^{-jl}\\
&=
k \ln(\beta/\alpha) - \sum\limits_{l\ge 0} \frac{1}{l z^l}
\times \Big( \beta^{-l}-\alpha^{-l} \Big)\frac{1-q^{-kl}}{1-q^{-l}}.
\end{align*}
Here in the second step we used Taylor series for $\ln(1-w)$. The above series converges for all $z$ large enough.
We can express this result in the following form: for $z$ large enough
$$
\ln(g_k(z;\alpha,\beta))=k \ln(\beta/\alpha) + \sum\limits_{l\ge 0} z^{-l} P_l(q^{-k})
$$
for some polynomials $P_l$ of degree $l$. After exponentiating both sides of the above identity
and collecting the powers of $z^{-l}$ on the right-hand side we obtain (for all $z$ large enough)
\begin{equation}\label{asympt_gk}
g_k(z;\alpha,\beta)=(\beta/\alpha)^k \sum\limits_{l \ge 0} z^{-l} \tilde P_l(q^{-k})
\end{equation}
where every function $\tilde P_l$ is a polynomial of degree $l$.

Next we use the identity $(w;q)_{k+l}=(w;q)_l (wq^l;q)_k$ and express $f_k$ in the following form
\begin{equation}\label{eq:f_k_R}
f_k(z)=-\frac{(zq^{1-{\mathbf b}};q)_{k+{\mathbf m}}}{(zq^{-{\mathbf a}};q)_{k+{\mathbf n}+1}}
=R(z) \prod\limits_{i=1}^r g_k(z;q^{-a_i+n_i+1},q^{-b_i+m_i+1}),
\end{equation}
where
$$
R(z)=-\frac{(zq^{1-{\mathbf b}};q)_{{\mathbf m}}}{(zq^{-{\mathbf a}};q)_{{\mathbf n}+1}}.
$$
The function $R$ has series expansion (for all $z$ large enough)
\begin{equation}\label{asympt_R}
R(z)=z^{M-N-r} \sum\limits_{l\ge 0} c_l z^{-l},
\end{equation}
for some coefficients $c_l$.
Using \eqref{asympt_gk} we find the expansion
\begin{equation}\label{asympt_product_gk}
\prod\limits_{i=1}^r g_k(z;q^{-a_i+n_i+1},q^{-b_i+m_i+1})=
A^k \sum\limits_{l \ge 0} z^{-l} \tilde Q_l(q^{-k}),
\end{equation}
for some polynomials $\tilde Q_l$ of degree $l$. Combining \eqref{asympt_R} and \eqref{asympt_product_gk} we arrive at the desired result \eqref{fk_asymptotics}.

Now we derive expansion of $g_k(z;\alpha,\beta)$ at $z=0$.
We compute
\begin{align*}
\ln(g_k(z;\alpha,\beta))&=
 \sum\limits_{j=0}^{k-1} \Big( \ln(1-\beta z q^{j}) - \ln(1-\alpha z q^j) \Big)\\
&=
 - \sum\limits_{j=0}^{k-1} \sum\limits_{l\ge 0}
\frac{1}{l} z^l q^{jl} \Big( \beta^{l}-\alpha^{l} \Big)
\\&=- \sum\limits_{l\ge 0} \frac{1}{l} z^l
\times \Big( \beta^{l}-\alpha^{l} \Big)
\frac{1-q^{kl}}{1-q^{l}}.
\end{align*}
The above series converges for $z$ small enough.
We can express this result in the following form: for $z$ small enough
$$
\ln(g_k(z;\alpha,\beta))=\sum\limits_{l\ge 0} z^{l} \hat P_l(q^{k})
$$
for some polynomials $\hat P_l$ of degree $l$. After exponentiating both sides of the above identity
and collecting the powers of $z^{l}$ on the right-hand side we obtain (for all $z$ small enough)
\begin{equation*}\label{asympt_gkzero}
g_k(z;\alpha,\beta)= \sum\limits_{l \ge 0} z^{l} \check P_l(q^{k})
\end{equation*}
where every function $\check P_l$ is a polynomial of degree $l$.
Using formula \eqref{eq:f_k_R} and the fact that the rational function $R$ has no poles at zero we obtain the desired expansion \eqref{fk_asymptotics_zero}.
\end{proof}

The above Lemma implies that the coefficient $C_{-1}(k)$ at $z^{-1}$ equals $0$ if $M-N-r-t<-1$;  and equals $A^kQ_p(q^{-k})$ if $M-N-r-t=p-1$, $p\in\N_0$, where $Q_p(y)$ is a polynomial of degree $p$ in $y$.
Similarly, the residue of $F_k(z)$ at $z=0$ is equal to zero if $t\le 0$ and it is equal to $\tilde{Q}_{t-1}(q^k)$ if $t\ge 1$, where $\tilde{Q}_{t-1}(y)$ is a polynomial of degree $t-1$ in $y$.

 Hence, we will have, assuming $Q_{-1}(y)\equiv0$ and  $\tilde{Q}_{-n}(y)\equiv0$ for all $n\in\N$,
\begin{align*}
S(z)&=\sum_{k=-n_{\max}}^{-m_{\min}-1}\alpha_kz^k+\sum_{k=-m_{\min}}^{\infty}z^k\sum_{i=1}^{r}\sum_{j=0}^{k+n_i}\gamma_{i,j}^{k+n_i}
\\
&=\sum_{k=-n_{\max}}^{-m_{\min}-1}\alpha_kz^k+\sum_{k=-m_{\min}}^{\infty}z^k[A^k Q_p(q^{-k})+\tilde Q_{t-1}(q^k)]
\\
&=\sum_{k=-n_{\max}}^{-m_{\min}-1}\alpha_kz^k+\sum_{k=-m_{\min}}^{\infty}(Az)^k(\alpha_{p,0}+\alpha_{p,1}q^{-k}+\cdots+\alpha_{p,p}q^{-pk})\\
& \qquad \qquad\qquad + \sum_{k=-m_{\min}}^{\infty}z^k(\beta_{t-1,0}+\beta_{t-1,1}q^{k}+\cdots+\beta_{t-1,t-1}q^{k(t-1)})
\\
&=\sum_{k=-n_{\max}}^{-m_{\min}-1}\alpha_kz^k+(z)^{-m_{min}}\Bigg(\frac{\tilde \alpha_{p,0}}{1-Az}+\frac{\tilde \alpha_{p,1}q^{m_{min}}}{1-Az/q}+\cdots+\frac{\tilde \alpha_{p,p}q^{pm_{min}}}{1-Az/q^p}\\
&\qquad \qquad +
\frac{\tilde \beta_{t-1,0}}{1-z}+\frac{\tilde \beta_{t-1,1}}{1-qz}+\cdots+\frac{\tilde \beta_{t-1,t-1}}{1- q^{t-1}z}\Bigg).
\end{align*}
Noting that the common denominator on the RHS equals $(Wz,q)_{p+1}(z;q)_{(t)_{+}}$ and we arrive at \eqref{eq:MainIdentityNew}.

Finally we compute the numbers $\beta_k$.  We will need the following two easily verifiable identities
\begin{equation}\label{eq:qpochhammertricks}
(q;q)_{k-j}=\frac{(-1)^jq^{j(j-1)/2-kj}(q;q)_{k}}{(q^{-k};q)_{j}},
~~~(q^{\alpha-j};q)_{m}=\frac{(q^{\alpha};q)_{m}(q^{1-\alpha};q)_{j}}{q^{mj}(q^{1-\alpha-m};q)_{j}}.
\end{equation}
Application of these identities to \eqref{eq:gamma_compact1} after some simplifications leads to
\begin{equation*}\label{eq:gamma_pure_j}
\gamma_{i,j}^{k+n_{i}}=\frac{q^{a_i(1-t)}q^{(N-M+r-1+t)j}(q^{1-\b+a_{i}};q)_{\m+k}(q^{-k-n_i};q)_{j}(q^{\b-a_{i}};q)_{j}(q^{\a_{[i]}-a_{i}-\n_{[i]}-k};q)_{j}}
{(q;q)_{k+n_i}(q^{a_{i}-\a_{[i]}};q)_{\n_{[i]}+k+1}(q^{\b-a_i-\m-k};q)_{j}(q^{1-a_i+\a_{[i]}};q)_{j}(q;q)_{j}},
\end{equation*}
so that
\begin{multline*}
\alpha_k=\sum_{i=1}^{r}\sum_{j=0}^{k+n_i}\gamma_{i,j}^{k+n_i}
\\
=\sum_{i=1}^{r}\frac{q^{a_i(1-t)}(q^{1-\b+a_{i}};q)_{\m+k}}{(q;q)_{k+n_i}(q^{a_{i}-\a_{[i]}};q)_{\n_{[i]}+k+1}}
{_{2r}\phi_{2r-1}}\left(\!\begin{array}{l}q^{-k-n_i},q^{\b-a_i},q^{\a_{[i]}-a_{i}-\n_{[i]}-k}\\q^{\b-a_i-\m-k},q^{1-a_i+\a_{[i]}}\end{array}\!\vline\, q^{N-M+r-1+t}\right),
\end{multline*}
where each term with $k+n_i<0$ is assumed to vanish.  Next, we recall  the Gauss expansion
$$
(a;q)_n=\sum\limits_{j=0}^{n}\qbinom{n}{j}_q q^{j(j-1)/2}(-a)^j,
$$
where $\qbinom{n}{j}_q$ is the $q$-binomial coefficients.  This yields
$$
(Wz;q)_{p+1}(z;q)_{(t)_{+}}=\sum_{j=0}^{p+1+(t)_{+}}(-z)^j\sum_{i+l=j}\qbinom{p+1}{l}_q\qbinom{(t)_{+}}{i}_q q^{[l(l-1)+i(i-1)]/2}W^l=\sum_{j=0}^{p+1+(t)_{+}}D_{j}(-z)^j,
$$
where the last equality is the definition of $D_{j}$.  Note that
$$
D_j=\qbinom{p+1}{j}_qq^{j(j-1)/2}W^j
$$
if $t\le0$.

Multiplying both sides of \eqref{eq:MainIdentityNew} by $(Wz;q)_{p+1}(z;q)_{(t)_{+}}$ and applying the above expansion, we obtain:
$$
\sum_{j=0}^{p+1+(t)_{+}}D_{j}(-z)^j\sum\limits_{k=-n_{\max}}^{\infty}\alpha_{k}z^k=\sum\limits_{k=-n_{\max}}^{p+(t)_{+}-m_{\min}}\beta_{k}z^{k}.
$$
Multiplying both sides by $z^{n_{\max}}$, changing $k+n_{\max}\to{k}$ and writing $\hat{\beta}_k=\beta_{k-n_{\max}}$,
$\hat{\alpha}_k=\alpha_{k-n_{\max}}$, we get
$$
\sum_{j=0}^{p+1+(t)_{+}}D_{j}(-z)^j
\sum\limits_{k=0}^{\infty}\hat{\alpha}_{k}z^k
=\sum\limits_{s=0}^{\infty}z^{s}\sum\limits_{j+k=s}D_{j}(-1)^j\hat{\alpha}_{k}
=\sum\limits_{k=0}^{p+(t)_{+}-m_{\min}+n_{\max}}\hat{\beta}_{k}z^{k}.
$$
In view of $D_{j}=0$ for $j>p+(t)_{+}+1$ this implies that
$$
\beta_{s-n_{\max}}=\hat{\beta}_s=\sum\limits_{j=0}^{\min(s,p+(t)_{+}+1)}D_{j}(-1)^j\hat{\alpha}_{s-j}
=\sum\limits_{j=0}^{\min(s,p+(t)_{+}+1)}D_{j}(-1)^j\alpha_{s-j-n_{\max}}
$$
for $s=0,\ldots,p+(t)_{+}-m_{\min}+n_{\max}$. Returning to $k=s-n_{\max}$ we obtain by changing the index of summation $j\to{k-j}$:
$$
\beta_{k}=\sum\limits_{j=0}^{\min(k+n_{\max},p+(t)_{+}+1)}(-1)^jD_{j}\alpha_{k-j}
=\sum\limits_{j=\max(-n_{\max},k-p-(t)_{+}-1)}^{k}(-1)^{k-j}D_{k-j}\alpha_{j}
$$
for $k=-n_{\max},\ldots,p+(t)_{+}-m_{\min}$. Substituting the formula for $\alpha_{j}$ we finally arrive at  \eqref{eq:beta-final}.
\end{proof}

\textbf{Remark.} In a recent paper \cite{Suzuki} Suzuki studied the coefficients of the contiguous relation for ${}_2\phi_{1}$ series.  As an important tool she established  an identity for ${}_2\phi_{1}$ basic hypergeometric function \cite[Lemma~15]{Suzuki}.
After renaming parameters and rearranging  terms her formula can be written as follows:
\begin{align}\label{eq:Suzuki}
&\sum_{i=1}^{2} c_i\frac{(q^{1-b_1+a_i};q)_{m_1-n_i}(q^{1-b_2+a_i};q)_{m_2-n_i}z^{-n_i}}
{(q^{a_i-a_{i'}};q)_{n_{i'}-n_i+1}}{_2\phi_{1}}\left(\!\begin{array}{l}q^{b_1-a_i},q^{b_2-a_i}\\q^{1+a_{i'}-a_i}\end{array}\!\vline\, z\right)
\\ \nonumber
&\qquad \times
{_2\phi_{1}}\left(\!\begin{array}{l}q^{1-b_1+a_i+m_1-n_i},q^{1-b_2+a_i+m_2-n_i}\\q^{1-a_{i'}+a_i+n_{i'}-n_i}\end{array}\!\vline\,Wz\right)
=\frac{z^{-\max(n_1,n_2)}P_l(z)}{(z;q)_{(t)_{+}}},
\end{align}
where $i'=3-i$, $a_1-a_2\notin\Z$ and $m_1,m_2,n_1,n_2$  are integers satisfying $m_1+m_2\leq n_1+n_2$, $P_l$ is a polynomial of a fixed degree $l$ expressible in terms of  $m_i$, $n_i$ and $t$,  and
$$
W=q^{\sum_{k=1}^{2}(b_k-a_k+n_k-m_k)+t-1},~~~c_i=q^{a_i(1-t)}q^{\sum_{k=1}^{2}(b_k-a_i)(m_k-n_i)}q^{-\sum_{k=1}^{2}(m_k-n_i)(m_k-n_i+1)/2}.
$$
After applying Heine's transformation formula
$$
{_2\phi_{1}}\left(\!\begin{array}{l}q^{a},q^{b}\\q^{c}\end{array}\!\vline\,z\right)=
\frac{(q^{a+b-c}z;q)_{\infty}}{(z;q)_{\infty}}
{_2\phi_{1}}\left(\!\begin{array}{l}q^{c-a},q^{c-b}\\q^{c}\end{array}\!\vline\,q^{a+b-c}z\right)
$$
to each
${_2 \phi_{1}}$ function in the left-hand side of \eqref{eq:Suzuki}, renaming the parameters according to
$a_j \mapsto -a_j$, $b_j \mapsto 1-b_j$, $m_j \mapsto -m_j$ and $n_j \mapsto -n_j$ and simplifying the result, formula \eqref{eq:Suzuki} reduces  to the $r=2$ case of \eqref{eq:MainIdentityNew}.
Note also that in her previous work \cite{Yamaguchi} the author discovered an identity for ${}_2\phi_{1}$ given in \cite[Lemma~3]{Yamaguchi}.  This identity is a particular case of \cite[Lemma~15]{Suzuki} and hence, also reduces to $r=2$ case of our formula with $t=0$.

In the following two statements we will explore some consequences of Theorem~\ref{thm:main}.  Namely, we will derive identities for finite sums of terminating and non-terminating ${_r\phi_{r-1}}$.

\begin{prop}\label{prop:sum-beta1}
Assume that all conditions of Theorem \ref{thm:main} are satisfied and that $|W|<1$. Then
\begin{equation}\label{eq:sum-beta1}
\sum_{i=1}^{r} q^{a_i(1-t)}\frac{(q^{1-\b+a_i};q)_{\infty}}{(q^{a_i-\a_{[i]}};q)_{\infty}}
{_r\phi_{r-1}}\left(\!\begin{array}{l}q^{\b-a_i}\\q^{1+\a_{[i]}-a_i}\end{array}\!\vline\, W\right)=
\frac{(q^t;q)_{\infty}}{(W;q)_{p+1}}\sum\limits_{k=-n_{max}}^{p+(t)_{+}-m_{min}}\beta_k.
\end{equation}
Note that $(q^t;q)_{\infty}=0$ for $t\le 0$ , so that the right-hand side vanishes for such values of $t$.
\end{prop}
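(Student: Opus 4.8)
The plan is to obtain \eqref{eq:sum-beta1} from Theorem~\ref{thm:main} by multiplying both sides of \eqref{eq:MainIdentityNew} by $(1-z)$ and letting $z\to1^{-}$ from inside the unit disk. Throughout I work in the regime $|z|<1$: since $|W|<1$ the argument $Wz$ also has modulus below $1$, so both ${}_r\phi_{r-1}$ factors on the left converge and the left-hand side genuinely equals the rational right-hand side there. The point is that the second ${}_r\phi_{r-1}$ diverges like $(1-z)^{-1}$ as $z\to1^{-}$, while the right-hand side carries the matching factor $(z;q)_{(t)_+}$ in its denominator; extracting the residual behaviour after multiplication by $(1-z)$ will produce exactly \eqref{eq:sum-beta1}.

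First I would rewrite the prefactor of the $i$-th summand on the left of \eqref{eq:MainIdentityNew}. Using $(a;q)_{N}=(a;q)_\infty/(aq^{N};q)_\infty$, valid for every integer $N$ under the conventions of \eqref{eq:q-shifted}, the prefactor $(q^{1-\b+a_i};q)_{\m-n_i}/(q^{a_i-\a_{[i]}};q)_{\n_{[i]}-n_i+1}$ factors as $(q^{1-\b+a_i};q)_\infty/(q^{a_i-\a_{[i]}};q)_\infty$, which is the very coefficient appearing in \eqref{eq:sum-beta1}, times the correction $(q^{1-\a_{[i]}+a_i+\n_{[i]}-n_i};q)_\infty/(q^{1-\b+a_i+\m-n_i};q)_\infty$. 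Crucially, the infinite products in this correction are built precisely from the lower and upper parameter strings of the second ${}_r\phi_{r-1}$ in the same summand.

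The central step is an Abel-type evaluation of that second series at $z=1$. Writing its coefficients as $c_n$, one checks that $c_n\to c_\infty:=(q^{1-\b+a_i+\m-n_i};q)_\infty\big/\big[(q^{1-\a_{[i]}+a_i+\n_{[i]}-n_i};q)_\infty(q;q)_\infty\big]$ geometrically fast, because $(q^{\alpha};q)_n=(q^{\alpha};q)_\infty\,(1+O(q^{n}))$; consequently $\sum_n(c_n-c_\infty)z^{n}$ continues analytically past $z=1$, whence $\lim_{z\to1^{-}}(1-z)\,{}_r\phi_{r-1}(\,\cdot\,;z)=c_\infty$. Multiplying by the correction factor collapses the product to the $i$-independent constant $1/(q;q)_\infty$, so that $\lim_{z\to1^{-}}(1-z)\cdot(\text{left-hand side of }\eqref{eq:MainIdentityNew})=\tfrac{1}{(q;q)_\infty}\sum_{i}q^{a_i(1-t)}\tfrac{(q^{1-\b+a_i};q)_\infty}{(q^{a_i-\a_{[i]}};q)_\infty}\,{}_r\phi_{r-1}(\,\cdot\,;W)$, using also $z^{-n_i}\to1$ and continuity of the first factor at $W$. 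On the right I would cancel the single factor $(1-z)$ inside $(z;q)_{(t)_+}$ to get $\lim_{z\to1}\tfrac{1-z}{(z;q)_{(t)_+}}=\tfrac{1}{(q;q)_{(t)_+-1}}$, where the convention $(q;q)_{-1}=\infty$ from \eqref{eq:q-shifted} correctly returns $0$ when $(t)_+=0$; together with $(Wz;q)_{p+1}\to(W;q)_{p+1}$ and $\sum_k\beta_kz^{k}\to\sum_k\beta_k$ this gives the right-hand limit $\tfrac{1}{(q;q)_{(t)_+-1}}\tfrac{\sum_k\beta_k}{(W;q)_{p+1}}$. Equating the two limits, clearing $1/(q;q)_\infty$, and invoking the elementary identity $(q;q)_\infty=(q;q)_{t-1}(q^{t};q)_\infty$ to rewrite $(q;q)_\infty/(q;q)_{t-1}=(q^{t};q)_\infty$ yields \eqref{eq:sum-beta1}; for $t\le0$ both this prefactor and $(q^{t};q)_\infty$ vanish, consistent with the closing remark of the proposition.

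The main obstacle is the passage to the boundary $z=1$: each individual summand on the left is singular there, and the argument depends on controlling this singularity through the Abelian limit for the nonterminating ${}_r\phi_{r-1}$ and on the cancellation that renders the limiting constant independent of $i$. A secondary technical point arises if some upper parameter of one of the series is a nonpositive integral power of $q$, for then that series terminates while the matching correction factor becomes singular; such degenerate parameter vectors are handled by establishing the identity for generic $\b$ and extending to the boundary values by continuity.
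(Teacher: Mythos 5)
Your proposal is correct and takes essentially the same route as the paper: the authors likewise multiply \eqref{eq:MainIdentityNew} by $(1-z)$, let $z\to 1^{-}$, establish the Abel-type limit \eqref{eq:r_phi_r-1_asymptotics} by exactly your argument (namely $(x;q)_k=(x;q)_{\infty}+O(q^k)$, so the series coefficients tend geometrically to their limit and the remainder series is analytic in $|z|<1/q$), and then simplify via $(x;q)_{\infty}=(x;q)_k\,(xq^k;q)_{\infty}$. Your bookkeeping of the factor $(1-z)/(z;q)_{(t)_{+}}$ together with $(q;q)_{\infty}/(q;q)_{t-1}=(q^{t};q)_{\infty}$ reproduces the stated right-hand side, including its vanishing for $t\le 0$.
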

\begin{proof}
The proof is based on the following asymptotic result:
\begin{equation}\label{eq:r_phi_r-1_asymptotics}
(1-z){_r\phi_{r-1}}\left(\!\begin{array}{c} {\mathbf v} \\ {\mathbf w}  \end{array}\!\vline\,q,z\right)
\to
\frac{({\mathbf v};q)_{\infty}}{({\mathbf w};q)_{\infty}(q;q)_{\infty}}, \;\;\; z\to 1-.
\end{equation}
This result is probably known, but we could not locate it in the literature. The proof of \eqref{eq:r_phi_r-1_asymptotics} is fairly simple. We use the fact that
$\ln(1-z)=O(|z|)$ as $z\to 0$ to obtain
$$
\ln((xq^k;q)_{\infty})=\sum\limits_{j\ge k} \ln(1-x q^j)=O(\sum\limits_{j\ge k} |x| q^j)=O(q^k), \;\;\; k\to +\infty.
$$
Thus $(xq^k;q)_{\infty}=1+O(q^k)$ as $k\to +\infty$ and
\begin{equation}\label{eq:(x;q)_k_asymptotics}
(x;q)_k=\frac{(x;q)_{\infty}}{(xq^k;q)_{\infty}}=(x;q)_{\infty}+O(q^k).
\end{equation}
Let us denote the right-hand side in \eqref{eq:r_phi_r-1_asymptotics} by $C$. From
\eqref{eq:(x;q)_k_asymptotics} we see that
$$
\frac{({\mathbf v};q)_{k}}{({\mathbf w};q)_{k}(q;q)_{k}}=C+O(q^k)
$$
and the above result implies
\begin{align*}
{_r\phi_{r-1}}\left(\!\begin{array}{c} {\mathbf v} \\ {\mathbf w}  \end{array}\!\vline\,q,z\right)
&=\sum_{k=0}^{\infty}\frac{({\mathbf v};q)_{k}}{({\mathbf w};q)_{k}(q;q)_{k}}z^k\\
&=\sum_{k=0}^{\infty}Cz^k+
\sum_{k=0}^{\infty}\Bigg[\frac{({\mathbf v};q)_{k}}{({\mathbf w};q)_{k}(q;q)_{k}}-C\Bigg] z^k\\
&=\frac{C}{1-z}+\Big\{{\textrm{a function analytic in the disk}} \; |z|<1/q \Big\}.
\end{align*}
The desired asymptotic formula \eqref{eq:r_phi_r-1_asymptotics} follows from the above result.

To derive \eqref{eq:sum-beta1} one needs to multiply both sides of
\eqref{eq:MainIdentityNew} by $(1-z)$, take a limit as $z\to 1-$ and simplify the resulting equation using identity $(x;q)_{\infty}=(x;q)_k (xq^k;q)_{\infty}$.
\end{proof}

Our next goal is an identity for terminating $q$-series.
Denote
\begin{equation}\label{eq:ABM2N2}
A=\sum a_j,~~B=\sum b_j,~~M_2=\sum m_i^2,~~N_2=\sum n_i^2.
\end{equation}
We will need an alternative asymptotic formula for $f_k(z)$. In view of
$$
(a;q)_{n}=(-a)^nq^{\binom{n}{2}}\left(\frac{q^{1-n}}{a};q\right)_{n},
$$
we have
\begin{multline*}
f_k(z)=-\frac{(zq^{1-\b};q)_{k+\m}}{(zq^{-\a};q)_{k+\n+1}}
\\
=-\frac{\left(z^{-1}q^{\b-k-\m};q\right)_{k+\m}}{\left(z^{-1}q^{\a-k-\n};\right)_{k+\n+1}}
(-z)^{M-N-r}(q^{1-\b})^{k+\m}(q^{-\a})^{-k-\n-1}q^{\binom{k+\m}{2}}q^{-\binom{k+\n+1}{2}}
\\
=-(-z)^{M-N-r}q^{k(r+A-B)+A+M+(\a,\n)-(\b,\m)}q^{k(M-N-r)+(M_2-N_2-M-N)/2}
\frac{\left(z^{-1}q^{\b-k-\m};q\right)_{k+\m}}{\left(z^{-1}q^{\a-k-\n};\right)_{k+\n+1}}
\\=(-1)^{M-N-r+1}q^{k(A-B+M-N)}q^{A+(\a,\n)-(\b,\m)+(M_2-N_2+M-N)/2}
z^{M-N-r} \hat{f}_k(z).
\end{multline*}
Then, by the standard Taylor series  $\log(1-x)=-\sum_{s\ge1}x^s/s$, we compute
\begin{multline*}
\log\hat{f}_k(z)=\sum_{i=1}^{r}\Bigg[\sum_{j=0}^{k+m_i-1}\log(1-z^{-1}q^{b_i-m_i-k+j})-\sum_{j=0}^{k+n_i}\log(1-z^{-1}q^{a_i-n_i-k+j})\Bigg]
\\
=\sum_{i=1}^{r}\Bigg[\sum_{j=0}^{k+n_i}\sum_{s\ge1}\frac{1}{s}z^{-s}q^{(a_i-n_i-k+j)s}-\sum_{j=0}^{k+m_i-1}\sum_{s\ge1}\frac{1}{s}z^{-s}q^{(b_i-m_i-k+j)s}\Bigg]
=\sum_{s\ge1}z^{-s}Q_s,
\end{multline*}
where
\begin{subequations}\label{eq:Qq}
\begin{equation}\label{eq:BigQ}
Q_s=Q_s(\a,\b,\m,\n)=\frac{1}{s}\sum_{i=1}^{r}\Bigg[\sum_{j=0}^{k+n_i}q^{(a_i-n_i-k+j)s}-\sum_{j=0}^{k+m_i-1}q^{(b_i-m_i-k+j)s}\Bigg].
\end{equation}
Then
$$
\hat{f}_k(z)=\sum\limits_{s=0}^{\infty}\frac{q_s}{z^s}
$$
with
\begin{equation}\label{eq:smnallq}
q_0=1,~~~~q_s=\frac{1}{s}\sum_{t=1}^{s}tQ_{t}q_{s-t}.
\end{equation}
\end{subequations}
This implies that the coefficient $C_{-1}(k)$ in the expansion
$$
f_{k}(z)=\sum\limits_{j=-\infty}^{M-N-r}C_{j}(k)z^{j}.
$$
is given by $p=M-N-r+1$
$$
C_{-1}(k)=(-1)^{M-N-r+1}q^{k(A-B+M-N)}q^{A+(\a,\n)-(\b,\m)+(M_2-N_2+M-N)/2}q_p(\a,\b,\m,\n)
$$

In a similar fashion in the neighbourhood of $z=0$ we calculate
$$
\log(-f_k)=\sum_{i=1}^{r}\Bigg[\sum_{j=0}^{k+m_i-1}\log(1-zq^{1-b_i+j})-\sum_{j=0}^{k+n_i}\log(1-zq^{-a_i+j})\Bigg]=\sum_{s=1}^{\infty}P_{s}z^{s},
$$
where
\begin{subequations}\label{eq:Pp}
\begin{equation}\label{eq:bigp}
P_s=P_s(\a,\b,\m,\n)=\frac{1}{s}\sum_{i=1}^{r}\Bigg[\sum_{j=0}^{k+n_i}q^{(-a_i+j)s}-\sum_{j=0}^{k+m_i-1}q^{(1-b_i+j)s}\Bigg].
\end{equation}
Hence,
$$
-f_k=e^{\log(-f_k)}=\sum_{s=0}^{\infty}p_{s}z^s
$$
with
\begin{equation}\label{eq:smnallp}
p_0=1,~~~~p_s=\frac{1}{s}\sum_{t=1}^{s}tP_{t}p_{s-t}.
\end{equation}
\end{subequations}
This yields
$$
\res\limits_{z=0}{F_k(z)}=-\res\limits_{z=0}{-z^{-t}f_k(z)}=-p_{t-1}
$$
On the other hand, from it follows from the proof of Theorem~\ref{thm:main}
that for $k\ge-m_{\min}$
$$
C_{-1}(k)- \res\limits_{z=0}{F_k(z)}=\sum_{i=1}^{r}\sum_{j=0}^{k+n_i}\gamma_{i,j}^{k+n_i}=\alpha_k.
$$
Hence we arrive at
\begin{prop}
Suppose  $k\ge-m_{\min}$  and $q_{-1}=0$, $q_0=1$ and  $q_p=q_p(\a,\b,\m,\n)$ is given by \eqref{eq:Qq} for $p\ge1$.
Similarly, $p_{-1}=0$ and $p_0=1$ and $p_{t-1}=p_{t-1}(\a,\b,\m,\n)$ is given by \eqref{eq:Pp} for $t\ge1$.
Then
\begin{multline}\label{eq:alphaIdentity}
\sum_{i=1}^{r}\frac{q^{a_i(1-t)}(q^{1-\b+a_{i}};q)_{\m+k}}{(q;q)_{k+n_i}(q^{a_{i}-\a_{[i]}};q)_{\n_{[i]}+k+1}}
{_{2r}\phi_{2r-1}}\left(\!\begin{array}{l}q^{-k-n_i},q^{\b-a_i},q^{\a_{[i]}-a_{i}-\n_{[i]}-k}\\q^{\b-a_i-\m-k},q^{1-a_i+\a_{[i]}}\end{array}\!\vline\, q^{N-M+r-1+t}\right)
\\
=(-1)^{M-N-r+1}q^{k(A-B+M-N)}q^{A+\a\cdot\n-\b\cdot\m+(M_2-N_2+M-N)/2}q_p +p_{t-1},
\end{multline}
where $A,B,M_2,N_2$ are defined in \eqref{eq:ABM2N2} and $\a\cdot\n=a_1n_1+\cdots +a_rn_r$ is the scalar product.
\end{prop}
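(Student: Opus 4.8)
The plan is to recognize the left-hand side of \eqref{eq:alphaIdentity} as the coefficient $\alpha_k=\sum_{i=1}^{r}\sum_{j=0}^{k+n_i}\gamma_{i,j}^{k+n_i}$ already isolated in the proof of Theorem~\ref{thm:main}, and then to evaluate it in closed form by the residue theorem together with the two Laurent expansions of $f_k$ derived above. Regrouping the $q$-Pochhammer products in $\gamma_{i,j}^{k+n_i}$ by means of \eqref{eq:qpochhammertricks} recasts the inner $j$-sum as a single ${}_{2r}\phi_{2r-1}$ at argument $q^{N-M+r-1+t}$, so that the left-hand side of \eqref{eq:alphaIdentity} is literally $\alpha_k$. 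The whole statement therefore reduces to producing a closed form for $\alpha_k$ valid for $k\ge -m_{\min}$, and the key relation, established inside the proof of Theorem~\ref{thm:main}, is
$$
\alpha_k=C_{-1}(k)-\res\limits_{z=0}F_k(z),
$$
where $C_{-1}(k)$ is the coefficient of $z^{-1}$ in the expansion of $f_k(z)$ at infinity, $F_k(z)=z^{-t}f_k(z)$, and the identity expresses the fact that the sum of all finite residues of $F_k$ equals $C_{-1}(k)$, the finite residues away from the origin being precisely the $\gamma_{i,j}^{k+n_i}$ whose total is $\alpha_k$.

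It then remains to read off the two ingredients from the explicit expansions. For $C_{-1}(k)$ I would use the factorization $f_k(z)=(-1)^{M-N-r+1}q^{k(A-B+M-N)}q^{A+\a\cdot\n-\b\cdot\m+(M_2-N_2+M-N)/2}z^{M-N-r}\hat f_k(z)$ with $\hat f_k(z)=\sum_{s\ge0}q_s z^{-s}$, obtained by applying the reflection formula $(a;q)_n=(-a)^nq^{\binom{n}{2}}(q^{1-n}/a;q)_n$ to each of the $2r$ Pochhammer factors and expanding $\log\hat f_k$ as a series in $z^{-1}$; since $f_k$ carries the overall power $z^{M-N-r}$, the coefficient of $z^{-1}$ selects the term $s=M-N-r+1=:p$, giving $q_p$ as determined by the recurrence \eqref{eq:smnallq} from the $Q_s$ of \eqref{eq:BigQ}. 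Symmetrically, the expansion of $-f_k$ about $z=0$, namely $-f_k(z)=\sum_{s\ge0}p_s z^s$ with $p_s$ from \eqref{eq:smnallp}, yields $\res_{z=0}F_k(z)=\res_{z=0}\bigl(z^{-t}f_k(z)\bigr)=-p_{t-1}$. Here $q_p$ and $p_{t-1}$ retain their dependence on $k$ through the geometric sums appearing in $Q_s$ and $P_s$.

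Substituting these two evaluations into $\alpha_k=C_{-1}(k)+p_{t-1}$ reproduces exactly the right-hand side of \eqref{eq:alphaIdentity}, the conventions $q_{-1}=0,\ q_0=1$ and $p_{-1}=0,\ p_0=1$ accounting for the degenerate cases $M-N-r<-1$ (where $C_{-1}(k)=0$) and $t\le0$ (where the residue at the origin vanishes). The structural part of the argument---residue theorem plus the two logarithmic expansions---is then immediate, and the one genuinely delicate step, which I expect to be the main obstacle, is the power-of-$q$ bookkeeping in the prefactor of $f_k$: one must track the total exponent accumulated from applying the reflection formula to all $2r$ factors together with the binomial exponents $\binom{k+\m}{2}$ and $\binom{k+\n+1}{2}$, and verify that it collapses precisely to $k(A-B+M-N)+A+\a\cdot\n-\b\cdot\m+(M_2-N_2+M-N)/2$. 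Once this collation of exponents is confirmed, nothing further is needed.
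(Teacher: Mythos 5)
Your proposal is correct and takes essentially the same route as the paper: you identify the left-hand side with $\alpha_k$ via \eqref{eq:qpochhammertricks}, invoke the relation $\alpha_k=C_{-1}(k)-\res_{z=0}F_k(z)$ established in the proof of Theorem~\ref{thm:main}, and evaluate the two contributions exactly as the paper does, through the reflection-formula prefactor with the logarithmic expansions \eqref{eq:Qq} at infinity (giving $q_p$) and \eqref{eq:Pp} at the origin (giving $p_{t-1}$). The only caveat, which you inherit from the paper's own text, is the indexing of $p$: reading $C_{-1}(k)$ off the expansion of $f_k$ selects $s=M-N-r+1$, whereas the sum-of-residues identity is applied to $F_k=z^{-t}f_k$, whose $z^{-1}$-coefficient selects $s=M-N-r-t+1$ (consistent with $p$ in Theorem~\ref{thm:main}), so for $t\neq 0$ the index of $q_p$ must be read accordingly.
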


\medskip

For small values of the components of $\n$, $\m$ the right hand sides of \eqref{eq:MainIdentityNew} and \eqref{eq:sum-beta1} turn out to be rather simple.  While formula \eqref{eq:beta-final} does provide a general form of the numbers $\beta_k$, it seems to be more practical to multiply \eqref{eq:MainIdentityNew} by $(Wz;q)_{p+1}(z;q)_{(t)_{+}}$ and compute the power series coefficients of the function on the left hand side at powers of $z$ from $-n_{\max}$ to $p+(t)_{+}-m_{\min}$.  Below we give some examples of the results obtained by this procedure for $r=3$. We present the complete right hand side of \eqref{eq:MainIdentityNew} and \eqref{eq:sum-beta1} for given $\n$, $\m$ and $t$.

\textbf{Example 1.}
Set $\n = (1, 1, 2)$, $\m = (1, 2, 2)$, $t=0$. Then the right-hand side of \eqref{eq:MainIdentityNew} takes
the form $q^{a_3}/[(1-q^{a_3-b_1})z^2]$. 
\bigskip

\textbf{Example 2.}
Set $\n = (0, 2, 3)$, $\m = (2, 2, 2)$, $t=0$. Then the right-hand side of \eqref{eq:MainIdentityNew} takes
the form
$$
\frac{ (1-q^{a_3-a_1-1})(1-q^{a_3-a_1-2})q^{a_3+b_1+b_2+b_3}}{(q^{b_1}-q^{a_3})(q^{b_2}-q^{a_3})(q^{b_3}-q^{a_3})z^3}.
$$
\bigskip

\textbf{Example 3.}
Set $\n = (0, 0, 1)$, $\m = (0, 1, 1)$, $t=0$. Then the right-hand side of \eqref{eq:MainIdentityNew} takes
the form
$$
\frac{q^{a_3}}{(1-q^{a_3-b_1})z}.
$$

\textbf{Example 4.}
Set $\n = (0, 0, 1)$, $\m = (0, 1, 1)$, $t=1$. Then the right-hand side of \eqref{eq:MainIdentityNew} takes
the form
$$
\frac{1}{(1-q^{a_3-b_1})z(1-z)}+\frac{1}{(1-q^{b_1-a_3})(1-z)}.
$$

\textbf{Example 5.}
Set $\n = (0, 0, 1)$, $\m = (0, 1, 1)$, $t=-1$. Then the right-hand side of \eqref{eq:MainIdentityNew} takes
the form
$$
\frac{q^{2a_3}}{(1-q^{a_3-b_1})z(1-Wz)}+\dfrac{q^{1+a_1+a_2+2a_3+b_1-b_2-b_3}}{(q^{a_3}-q^{b_1})(1-Wz)}
$$

\section{The confluent case}

In the following theorem we give an analogue of Theorem~\ref{thm:main}  for the function ${}_s\phi_{r-1}$ with $s<{r}$.  The result is given in terms of the mixture of ${}_s\phi_{r-1}$ and ${}_s\hat{\phi}_{r-1}$ functions.  Using formula \eqref{eq:phitildephi} it can be rewritten in terms of the standard  ${}_s\phi_{r-1}$  functions only.  We retain the notation
$$
m_{\min}=\min\limits_{1\le i \le s}(m_i),~~n_{\max}=\max\limits_{1\le i \le r}(n_i),~~M=\sum\limits_{i=1}^s m_i, ~~N=\sum\limits_{i=1}^{r}n_i
$$
from Theorem~\ref{thm:main}.

\begin{theorem}\label{thm:confluent}
Assume that $q$ is a complex number satisfying $|q|<1$,  $s$ and $r$ are integers such that $0\leq {s}<r$, $\a\in\mathbb{C}^r$ satisfies $a_i-a_j\notin\Z$ for $1\le i < j\le r$ and  $\b\in\mathbb{C}^{s}$ is  arbitrary; assume further that $\m\in\Z^s$, $\n\in\Z^r$ and $t\in \Z$. Then, for the numbers $\beta_k$ given by
\begin{multline}\label{eq:beta-confluent}
\beta_{k}=\sum\limits_{j=\max(-n_{\max},k-(t)_{+})}^{k}\qbinom{(t)_{+}}{k-j}_q q^{(k-j)(k-j-1)/2}(-1)^{k-j}
\\
\times\sum_{i=1}^{r}\frac{q^{a_i(1-t)}(q^{1-\b+a_{i}};q)_{\m+j}}{(q;q)_{j+n_i}(q^{a_{i}-\a_{[i]}};q)_{\n_{[i]}+j+1}}
{_{r+s}\phi_{r+s-1}}\left(\!\begin{array}{l}q^{-j-n_i},q^{\b-a_i},q^{\a_{[i]}-a_{i}-\n_{[i]}-j}\\q^{\b-a_i-\m-j},q^{1-a_i+\a_{[i]}}\end{array}\!\vline\, q^{N-M+r-1+t+j(r-s)}\right)
\end{multline}
the following identity holds
\begin{multline}\label{eq:_t_r}
\sum_{i=1}^{r} q^{a_i(1-t)}\frac{(q^{1-\b+a_i};q)_{\m-n_i}z^{-n_i}}{(q^{a_i-\a_{[i]}};q)_{\n_{[i]}-n_i+1}}{_s\phi_{r-1}}\left(\!\begin{array}{l}q^{\b-a_i}\\q^{1+\a_{[i]}-a_i}\end{array}\!\vline\, W q^{(s-r)a_i}z\right)
\\
\times
{}_s\hat{\phi}_{r-1}\left(\!\begin{array}{l}q^{1-\b+a_i+\m-n_i}\\q^{1-\a_{[i]}+a_i+\n_{[i]}-n_i}\end{array}\!\vline\,z\right)
=\frac{1}{(z;q)_{(t)_{+}}}\sum_{k=-n_{\max}}^{K} \beta_{k}z^k,
\end{multline}
where  $p'=[(M-N-r-t+1)/(r-s)]$,  $K=\max(-m_{\min}-1,p')+(t)_{+}$
and
$$
W=q^{t+r-1}\prod\limits_{i=1}^r q^{a_i} \prod\limits_{i=1}^s q^{-b_i}.
$$
\end{theorem}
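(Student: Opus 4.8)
The plan is to follow the residue-theoretic template of the proof of Theorem~\ref{thm:main} almost verbatim, the one structural novelty being that the leading behaviour at infinity of the relevant rational function now depends on the summation index. First I would form the Cauchy product of the two series on the left of \eqref{eq:_t_r}. Since the first factor is ${}_s\phi_{r-1}$ (carrying the weight $[(-1)^nq^{\binom{n}{2}}]^{r-s}$) while the second is the Bailey--Slater ${}_s\hat\phi_{r-1}$ (carrying no such weight), the resulting coefficients $\gamma_{i,j}^{k}$ differ from those in \eqref{eq:gamma_ij} only by an explicit power of $q$; applying \eqref{eq:qPochhammerId} and \eqref{eq:q-Gamma-q-Poch} exactly as in the main proof collapses them to a compact form whose $j$-sum is the ${}_{r+s}\phi_{r+s-1}$ of \eqref{eq:beta-confluent}, its argument acquiring the extra power $q^{(r-s)k}$ from the $r-s$ unbalanced factors --- this is the origin of the term $j(r-s)$ in the base after the final relabelling. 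The same functions $f_k(z)=-(zq^{1-\b};q)_{k+\m}/(zq^{-\a};q)_{k+\n+1}$ and $F_k(z)=z^{-t}f_k(z)$ realize these coefficients as residues $\res_{z=q^{a_i-j}}F_k(z)=\gamma_{i,j}^{k+n_i}$, the hypothesis $a_i-a_j\notin\Z$ again forcing all poles to be simple; the sole change is that $\b\in\C^s$, so $f_k$ now has $s$ factors in the numerator against $r$ in the denominator.

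This imbalance is the heart of the matter. Counting degrees, $f_k$ behaves at infinity like $z^{M-N-r-(r-s)k}$, a power that strictly decreases in $k$ because $r-s\ge1$. Consequently the coefficient $C_{-1}(k)$ of $z^{-1}$ in the expansion of $F_k=z^{-t}f_k$ at infinity vanishes whenever $(r-s)k>M-N-r-t+1$, that is for $k>p'$ with $p'=[(M-N-r-t+1)/(r-s)]$. Thus the contribution of the residue at infinity is a \emph{finite} Laurent polynomial in $z$ rather than a rational function, which is exactly why the factor $(Wz;q)_{p+1}$ present in \eqref{eq:MainIdentityNew} disappears here and only $(z;q)_{(t)_+}$ survives. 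To make this precise I would prove the confluent analogue of Lemma~\ref{lemma1}: split $f_k$ into $s$ balanced ratios $g_k(z;\alpha,\beta)$, treated as in \eqref{asympt_gk}, together with $r-s$ leftover denominator factors $(zq^{-a_i};q)_{k+n_i+1}$ with $i>s$. The balanced part expands as in \eqref{fk_asymptotics}, while each leftover factor contributes a further factor whose leading power in $z$ decreases linearly in $k$; together they lower the degree at infinity to $M-N-r-(r-s)k$ and inject the $q^{(r-s)k}$-shift into the base, so that the net coefficient of $z^{-1}$ is, for $-m_{\min}\le k\le p'$, an explicit expression that I identify with the $i$-sum of \eqref{eq:beta-confluent}.

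With these two ingredients the assembly runs parallel to the main proof. For $k\ge-m_{\min}$ the sum of the finite residues equals $C_{-1}(k)-\res_{z=0}F_k(z)$; the residue at $z=0$, coming solely from the factor $z^{-t}$, is again a polynomial in $q^{k}$ of degree $t-1$ when $t\ge1$ and vanishes for $t\le0$. Summing $\sum_k z^kC_{-1}(k)$ (a finite sum) and $\sum_k z^k(-\res_{z=0}F_k)$ (a geometric tail) produces a Laurent polynomial plus a rational function whose only denominator is $(z;q)_{(t)_+}$. Clearing that denominator yields \eqref{eq:_t_r}, and the top power $K=\max(-m_{\min}-1,p')+(t)_+$ records the larger of the degree $-m_{\min}-1$ of the low-order correction terms $\alpha_k$ and the degree $p'$ of the finite infinity-sum, raised by the degree $(t)_+$ of $(z;q)_{(t)_+}$. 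Finally, expanding $(z;q)_{(t)_+}$ by the $q$-Gauss formula and convolving gives $\beta_k$; because there is now no $(Wz;q)_{p+1}$ factor, only the single $q$-binomial $\qbinom{(t)_+}{k-j}_q$ survives, reproducing \eqref{eq:beta-confluent}.

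The hard part will be the bookkeeping in the confluent version of Lemma~\ref{lemma1}: tracking how the $r-s$ unbalanced factors simultaneously shift the leading $z$-degree by $-(r-s)k$ and push the power $q^{(r-s)k}$ into the base of the ${}_{r+s}\phi_{r+s-1}$, and pinning down the precise range of nonvanishing $k$ so that $p'$ and $K$ emerge as stated. As an independent check I would verify the result against the confluence limit of Theorem~\ref{thm:main}: sending $b_{s+1},\dots,b_r\to-\infty$ turns the second ${}_r\phi_{r-1}$ into ${}_s\hat\phi_{r-1}$ (its extra numerator parameters tending to $0$, cf.\ \eqref{eq:phitildephi}) and, via the asymptotics $(a;q)_n\sim(-a)^nq^{\binom{n}{2}}$, turns the first ${}_r\phi_{r-1}$ into ${}_s\phi_{r-1}$ while rescaling its argument by $q^{(s-r)a_i}$ and replacing the main-theorem $W$ by the confluent one. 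Confirming that all divergent prefactors, including $(Wz;q)_{p+1}$, cancel in this limit would corroborate the residue computation.
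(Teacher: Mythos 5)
Your proposal is correct and follows essentially the same route as the paper's own proof: the paper likewise repeats the Cauchy-product/residue scheme of Theorem~\ref{thm:main} verbatim, observes that $F_k(z)=\sum_{j\le M-N-r-t-(r-s)k}C_j(k)z^j$ so that $C_{-1}(k)$ vanishes for $(r-s)k>M-N-r-t+1$ (whence the residue-at-infinity contribution becomes a finite Laurent polynomial and the factor $(Wz;q)_{p+1}$ disappears), treats the residue at $z=0$ via the $q$-binomial theorem combined with Lemma~\ref{lemma1} to get a degree-$(t-1)$ polynomial in $q^k$, and extracts $\beta_k$ by convolving with the Gauss expansion of $(z;q)_{(t)_{+}}$. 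Every structural novelty you flagged --- the $q^{j(r-s)}$ shift in the base of the ${}_{r+s}\phi_{r+s-1}$, the cutoff $p'=[(M-N-r-t+1)/(r-s)]$, and $K=\max(-m_{\min}-1,p')+(t)_{+}$ --- matches the paper's argument.
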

\begin{proof}
Repeating the proof of Theorem~\ref{thm:main} we compute
\begin{multline*}
S(z)=\sum_{i=1}^{r}q^{a_i(1-t)}\frac{(q^{1-\b+a_i};q)_{\m-n_j}z^{-n_i}}{(q^{a_i-\a_{[i]}};q)_{\n_{[i]}-n_i+1}}
\\
\times{_s\phi_{r-1}}\left(\!\begin{array}{l}q^{\b-a_i}\\q^{1+\a_{[i]}-a_i}\end{array}\!\vline\, W q^{(s-r)a_i}z\right)
{_s\hat{\phi}_{r-1}}\left(\!\begin{array}{l}q^{1-\b+a_i+\m-n_i}\\q^{1-\a_{[i]}+a_i+\n_{[i]}-n_i}\end{array}\!\vline\,z\right)
\\
=\sum_{i=1}^{r}\sum_{k=0}^{\infty}z^{k-n_{i}}\sum_{j=0}^{k}
\frac{q^{a_i(1-t)}(q^{1-\b+a_i};q)_{\m-n_i}(q^{\b-a_i};q)_{j}(q^{1-\b+a_i+\m-n_i};q)_{k-j}(Wq^{(s-r)a_i})^{j}}
{(q^{a_i-\a_{[i]}};q)_{\n_{[i]}-n_i+1}(q^{1+\a_{[i]}-a_i};q)_{j}(q^{1-\a_{[i]}+a_i+\n_{[i]}-n_i};q)_{k-j}(q;q)_{j}(q;q)_{k-j}}
\\
\times\left[(-1)^{j} q^{j(j-1)/2}\right]^{r-s}
=\sum_{i=1}^{r}\sum_{k=0}^{\infty}z^{k-n_i}\sum_{j=0}^{k}\gamma_{i,j}^k =\sum_{i=1}^{r}\sum_{k_i=-n_i}^{\infty}z^{k_i}\sum_{j=0}^{k_i+n_i} \gamma_{i,j}^{k_i+n_i},
\end{multline*}
where
\begin{equation}\label{eq:gamma_ij_thm2}
\gamma_{i,j}^k=\frac{q^{a_i(1-t)}(q^{1-\b+a_i};q)_{\m-n_i}(q^{\b-a_i};q)_{j}(q^{1-\b+a_i+\m-n_i};q)_{k-j}(Wq^{(s-r)a_i})^{j}}
{(q^{a_i-\a_{[i]}};q)_{\n_{[i]}-n_i+1}(q^{1+\a_{[i]}-a_i};q)_{j}(q^{1-\a_{[i]}+a_i+\n_{[i]}-n_i};q)_{k-j}(q;q)_{j}(q;q)_{k-j}}\left[(-1)^{j} q^{j(j-1)/2}\right]^{r-s}.
\end{equation}
After a calculation similar to that in the proof of Theorem~\ref{thm:main}, we arrive at
\begin{multline*}\label{eq:gamma_compact}
\gamma_{i,j}^k=\frac{(-1)^{(s-r+1)j}q^{(s-r+1)j(j-1)/2}q^{a_i(1-t)+tj}(q^{1-\b+a_{i}-j};q)_{\m+k-n_{i}}}{(q^{a_{i}-\a_{[i]}-j};q)_{\n_{[i]}+k-n_{i}+1}(q;q)_{j}(q;q)_{k-j}}\left[(-1)^{j} q^{j(j-1)/2}\right]^{r-s}
\\
=\frac{(-1)^{j}q^{j(j-1)/2}q^{a_i(1-t)+tj}(q^{1-\b+a_{i}-j};q)_{\m+k-n_{i}}}{(q^{a_{i}-\a_{[i]}-j};q)_{\n_{[i]}+k-n_{i}+1}(q;q)_{j}(q;q)_{k-j}}
\end{multline*}
or
\begin{equation}\label{eq:gamma_compact2}
\gamma_{i,j}^{k+n_{i}}=\frac{(-1)^{j}q^{j(j-1)/2}q^{a_i(1-t)+tj}(q^{1-\b+a_{i}-j};q)_{\m+k}}{(q^{a_{i}-\a_{[i]}-j};q)_{\n_{[i]}+k+1}(q;q)_{j}(q;q)_{k+n_{i}-j}}.
\end{equation}

Again, according to \eqref{eq:qPochhammerId} we have $(q;q)_{j}=\infty$ for $j<0$ so that also $(q;q)_{k-j}=\infty$ for $j>k$ and thus formula \eqref{eq:gamma_compact2} implies that $\gamma_{i,j}^{k+n_i}=0$ for $j<0$ and $j>k+n_i$. This relations extend definition of $\gamma_{i,j}^{k+n_i}$ to arbitrary $j$, and in view of this convention, we have
$$
S(z)=\sum_{k=-n_{\max}}^{\infty}z^k\sum_{i=1}^{r}\sum_{j=0}^{k+n_i}\gamma_{i,j}^{k+n_i}.
$$
Next, note that $k+m_i\ge0$ for all $i=1,\dots,r$ if $k\ge-m_{\min}$. If $-m_{\min}\le -n_{\max}$, then $k\ge -m_{\min}$ for all terms in the above sum. Otherwise, if $-m_{\min}>-n_{\max}$ we can write
$$
S(z)=\sum_{k=-n_{\max}}^{-m_{\min}-1}\alpha_k z^k+\sum_{k=-m_{\min}}^{\infty}z^k\sum_{i=1}^{r}\sum_{j=0}^{k+n_i} \gamma_{i,j}^{k+n_i}
=\sum_{k=-n_{\max}}^{-m_{\min}-1}\alpha_kz^k +S_1(z),
$$
where
$$
\alpha_k=\sum_{i=1}^{r}\sum_{j=0}^{k+n_i}\gamma_{i,j}^{k+n_i}.
$$

Next, for each $k\in\Z$ define the functions
\begin{equation}\label{eq:f_k2}
f_k(z)=-\frac{(zq^{1-\b};q)_{k+\m}}{(zq^{-\a};q)_{k+\n+1}}~~\text{and}~~F_k(z)=z^{-t} f_k(z).
\end{equation}
These functions are rational for all $k\in\Z$.  Further, if  $k\ge-m_{\min}$ the expression
in the numerator in \eqref{eq:f_k2} is a polynomial and all poles of $f_k(z)$ come from the zeros of the denominator.  If $k+n_i+1>0$ for all $i=1,\ldots,r$, then the poles of $f_k(z)$ are at the points:
$$
(zq^{-\a};q)_{k+\n+1}=0~~\Leftrightarrow~~z=q^{a_{i}-j},~i=1,\ldots,r~\text{and}~j=0,\ldots,k+n_{i}.
$$
Since the poles are simple, we have after some rearrangement:
$$
\res\limits_{z=q^{a_{i}-j}}F_k(z)
=\frac{(-1)^{j}q^{j(j-1)/2}q^{a_i(1-t)+tj}(q^{1-\b+a_{i}-j};q)_{\m+k}}
{(q^{a_{i}-\a_{[i]}-j};q)_{\n_{[i]}+k+1}(q;q)_{j}(q;q)_{k+n_{i}-j}}
=\gamma_{i,j}^{k+n_{i}}.
$$
It is clear that
$$
F_{k}(z)=\sum\limits_{j=-\infty}^{M-N-r-t-(r-s)k}C_{j}(k)z^{j}.
$$
It implies that $C_{-1}(k)=0$ for $M-N-r-t-(r-s)k<-1$, i.e. for $(r-s)k>M-N-r-t+1$.
Using the $q$-binomial theorem in the form
$$
\frac{1}{(z;q)_k}=\sum\limits_{j\ge 0} \frac{(q^k;q)_j}{(q;q)_j} z^j
$$
and the results of Lemma \ref{lemma1} we see that
$$
f_k(z)=\sum\limits_{j\ge 0} Q_j(q^k) z^j
$$
for some polynomial $Q_j$ of degree $j$ which does not depend on $k$. Thus the residue of $F_k(z)$ at $z=0$ is equal to $Q_{t-1}(q^k)$ if $t\ge 1$ and we obtain
$$
\alpha_k=\sum_{i=1}^{r}\sum_{j=0}^{k+n_i}\gamma_{i,j}^{k+n_i}=C_{-1}(k)+Q_{t-1}(q^k) {\mathbf 1}_{\{t\ge 1\}}
$$

Then if $t\le 0$ we have
$$
S_1(z) = \sum_{k=-m_{\min}}^{\infty} \alpha_k z^k= \left\{\begin{array}{ll}\sum_{k=-m_{\min}}^{(r-s)k\le M-N-r-t+1} C_{-1}(k) z^k, & -p'\le m_{\min}
\\
\\
0, & -p'>m_{\min}\end{array}\right.
$$
and if $t\ge 1$ and $-p' \le m_{\min}$ we have
\begin{align*}
S_1(z)=\sum_{k=-m_{\min}}^{\infty}\alpha_k z^k
=\sum_{k=-m_{\min}}^{(r-s)k\le M-N-r-t+1}C_{-1}(k)z^k
+\sum\limits_{j=0}^{t-1}\frac{\beta_{t-1,j}}{1-zq^t}
\end{align*}
and if $t\ge 1$ and $-p' > m_{\min}$ we have
\begin{align*}
S_1(z)=\sum_{k=-m_{\min}}^{\infty} \alpha_k z^k=  \sum\limits_{j=0}^{t-1} \frac{\beta_{t-1,j}}{1-zq^t}.
\end{align*}
Finally,
$$
S(z)=\sum_{-n_{\max}}^{-m_{\min}-1}\alpha_{k}z^k+S_1(z)=\frac{1}{(z;q)_{(t)_{+}}}
\sum_{j=-n_{\max}}^{\max(-m_{\min}-1,p')+(t)_{+}} \beta_j z^j.
$$

Next we compute the numbers $\beta_k$.  Application of the  identities \eqref{eq:qpochhammertricks} to \eqref{eq:gamma_compact2} after some simplifications leads to
\begin{equation*}
\gamma_{i,j}^{k+n_{i}}=\frac{q^{a_i(1-t)}q^{(N-M+r-1+t+k(r-s))j}(q^{1-\b+a_{i}};q)_{\m+k}(q^{-k-n_i};q)_{j}(q^{\b-a_{i}};q)_{j}(q^{\a_{[i]}-a_{i}-\n_{[i]}-k};q)_{j}}
{(q;q)_{k+n_i}(q^{a_{i}-\a_{[i]}};q)_{\n_{[i]}+k+1}(q^{\b-a_i-\m-k};q)_{j}(q^{1-a_i+\a_{[i]}};q)_{j}(q;q)_{j}},
\end{equation*}
so that
\begin{multline*}
\alpha_k=\sum_{i=1}^{r}\sum_{j=0}^{k+n_i}\gamma_{i,j}^{k+n_i}
\\
=\sum_{i=1}^{r}\frac{q^{a_i(1-t)}(q^{1-\b+a_{i}};q)_{\m+k}}{(q;q)_{k+n_i}(q^{a_{i}-\a_{[i]}};q)_{\n_{[i]}+k+1}}
{_{r+s}\phi_{r+s-1}}\left(\!\begin{array}{l}q^{-k-n_i},q^{\b-a_i},q^{\a_{[i]}-a_{i}-\n_{[i]}-k}\\q^{\b-a_i-\m-k},q^{1-a_i+\a_{[i]}}\end{array}\!\vline\, q^{N-M+r-1+t+k(r-s)}\right),
\end{multline*}
where each term with $k+n_i<0$ is assumed to be equal to zero.  Next, according to the Gauss expansion
$$
(z;q)_{(t)_{+}}=\sum\limits_{j=0}^{(t)_{+}}\qbinom{(t)_{+}}{j}_q q^{j(j-1)/2}(-z)^j,
$$
where $\qbinom{(t)_{+}}{j}_q$ are the $q$-binomial coefficients.  Hence, multiplying both sides by $z^{n_{\max}}$, changing $k+n_{\max}\to{k}$ and writing $\hat{\beta}_k=\beta_{k-n_{\max}}$,
$\hat{\alpha}_k=\alpha_{k-n_{\max}}$, we get
$$
\sum\limits_{j=0}^{(t)_{+}}\qbinom{(t)_{+}}{j}_q q^{j(j-1)/2}(-z)^j
\sum\limits_{k=0}^{\infty}\hat{\alpha}_{k}z^k
=\sum\limits_{s=0}^{\infty}z^{s}\sum\limits_{j+k=s}\qbinom{(t)_{+}}{j}_q
q^{j(j-1)/2}(-1)^j\hat{\alpha}_{k}
=\sum\limits_{k=0}^{K+n_{max}}\hat{\beta}_{k}z^{k}.
$$
In view of $\qbinom{(t)_{+}}{j}_q=0$ for $j>(t)_{+}$ this implies that
$$
\beta_{s-n_{\max}}=\hat{\beta}_s=\sum\limits_{j=0}^{\min(s,(t)_{+})}\qbinom{(t)_{+}}{j}_q q^{j(j-1)/2}(-1)^j\hat{\alpha}_{s-j}
=\sum\limits_{j=0}^{\min(s,(t)_{+})}\qbinom{(t)_{+}}{j}_q q^{j(j-1)/2}(-1)^j\alpha_{s-j-n_{\max}}
$$
for $s=0,\ldots,K+n_{\max}$. Returning to $k=s-n_{\max}$ we obtain by changing the index of summation $j\to{k-j}$:
\begin{multline*}
\beta_{k}=\sum\limits_{j=0}^{\min(k+n_{\max},(t)_{+})}\qbinom{(t)_{+}}{j}_q q^{j(j-1)/2}(-1)^j\alpha_{k-j}
\\
=\sum\limits_{j=\max(-n_{\max},k-(t)_{+})}^{k}\qbinom{(t)_{+}}{k-j}_q q^{(k-j)(k-j-1)/2}(-1)^{k-j}\alpha_{j}
\end{multline*}
for $k=-n_{\max},\ldots,K$. Substituting the expression for $\alpha_{j}$ derived above,  we finally arrive at  \eqref{eq:beta-confluent}.
\end{proof}

In the same way as we proved Proposition \ref{prop:sum-beta1} above, one can obtain the following result (all details are left to the reader).
\begin{prop}\label{prop:sum-beta2}
Assume that all conditions of Theorem \ref{thm:confluent} are satisfied. Then
\begin{equation}\label{eq:sum-beta2}
\sum_{i=1}^{r} q^{a_i(1-t)}\frac{(q^{1-\b+a_i};q)_{\infty}}{(q^{a_i-\a_{[i]}};q)_{\infty}}{_s\phi_{r-1}}\left(\!\begin{array}{l}q^{\b-a_i}\\q^{1+\a_{[i]}-a_i}\end{array}\!\vline\, W q^{(s-r)a_i}\right)
=
(q^t;q)_{\infty}\sum\limits_{k=-n_{max}}^{K}\beta_k.
\end{equation}
Note that $(q^t;q)_{\infty}=0$ for $t\le 0$ , so that the right-hand side vanishes for such values of $t$.
\end{prop}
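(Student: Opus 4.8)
The plan is to mirror the proof of Proposition~\ref{prop:sum-beta1}: multiply the identity \eqref{eq:_t_r} of Theorem~\ref{thm:confluent} by $(1-z)$, let $z\to1-$, and evaluate both sides using the asymptotic relation \eqref{eq:r_phi_r-1_asymptotics}. Since \eqref{eq:_t_r} holds for all $z\in(0,1)$ and the left-hand side is a finite sum over $i$, passing to the limit termwise is legitimate, so the whole argument reduces to understanding the behaviour of each factor as $z\to1-$.

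On the left-hand side the first factor ${}_s\phi_{r-1}(\,\cdot\,|\,Wq^{(s-r)a_i}z)$ is an \emph{entire} function of $z$, because $s<r$ forces the numerator count to be at most the denominator count (cf. the convergence remark after \eqref{eq:phi}); hence it is continuous at $z=1$ and tends to ${}_s\phi_{r-1}(\,\cdot\,|\,Wq^{(s-r)a_i})$, while $z^{-n_i}\to1$. The second factor ${}_s\hat{\phi}_{r-1}$ is the one producing the singular behaviour: rewriting it through \eqref{eq:phitildephi} as a ${}_r\phi_{r-1}$ with $r-s$ additional zero numerator parameters and applying \eqref{eq:r_phi_r-1_asymptotics} (the spurious zero parameters contribute factors $(0;q)_{\infty}=1$), I obtain
$$
(1-z)\,{}_s\hat{\phi}_{r-1}\!\left(\!\begin{array}{l}q^{1-\b+a_i+\m-n_i}\\q^{1-\a_{[i]}+a_i+\n_{[i]}-n_i}\end{array}\!\vline\,z\right)\ \longrightarrow\ \frac{(q^{1-\b+a_i+\m-n_i};q)_{\infty}}{(q^{1-\a_{[i]}+a_i+\n_{[i]}-n_i};q)_{\infty}\,(q;q)_{\infty}}.
$$
Combining the surviving finite $q$-Pochhammer prefactors with these infinite products via $(x;q)_{\infty}=(x;q)_k\,(xq^k;q)_{\infty}$ — exactly the telescoping step of Proposition~\ref{prop:sum-beta1} — the numerator factors merge into $(q^{1-\b+a_i};q)_{\infty}$ and the denominator factors into $(q^{a_i-\a_{[i]}};q)_{\infty}$, so that $(1-z)$ times the left side of \eqref{eq:_t_r} converges to $(q;q)_{\infty}^{-1}$ times the left side of \eqref{eq:sum-beta2}.

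On the right-hand side I treat the two sign regimes of $t$ separately. For $t\ge1$ we have $(t)_{+}=t$ and $(z;q)_{t}=(1-z)(zq;q)_{t-1}$, so $(1-z)/(z;q)_{t}\to1/(q;q)_{t-1}$ as $z\to1-$ (the remaining factors $1-zq^{j}$, $1\le j\le t-1$, stay nonzero since $|q|<1$), while the polynomial $\sum_{k}\beta_{k}z^{k}\to\sum_{k}\beta_{k}$. Equating the two limits and using $(q;q)_{\infty}=(q;q)_{t-1}(q^{t};q)_{\infty}$ turns the leftover $(q;q)_{\infty}^{-1}$ into the factor $(q^{t};q)_{\infty}$ and yields \eqref{eq:sum-beta2}. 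For $t\le0$ we have $(t)_{+}=0$, so the explicit factor $(1-z)$ drives the right side to $0$; this is consistent because $(q^{t};q)_{\infty}=0$ for such $t$, and the identity then encodes the nontrivial statement that the sum on the left of \eqref{eq:sum-beta2} vanishes. Note that, in contrast to Proposition~\ref{prop:sum-beta1}, no factor $(Wz;q)_{p+1}$ occurs here — the first hypergeometric factor is entire rather than a ${}_r\phi_{r-1}$, so \eqref{eq:_t_r} carries no such pole — and consequently the hypothesis $|W|<1$ is not required.

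The only genuinely delicate point is the application of \eqref{eq:r_phi_r-1_asymptotics} to ${}_s\hat{\phi}_{r-1}$: one must confirm that the $r-s$ vanishing numerator parameters do not disturb the limit (they do not, since $(0;q)_{\infty}=1$) and that the accompanying Pochhammer bookkeeping telescopes cleanly. Everything else is the routine limit computation already carried out for Proposition~\ref{prop:sum-beta1}, which is why the details may safely be left to the reader.
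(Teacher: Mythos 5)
Your proposal is correct and is precisely the argument the paper intends: the paper's own ``proof'' of Proposition~\ref{prop:sum-beta2} is the one-line remark that it follows in the same way as Proposition~\ref{prop:sum-beta1}, i.e.\ multiply \eqref{eq:_t_r} by $(1-z)$, let $z\to1-$, apply \eqref{eq:r_phi_r-1_asymptotics} (via \eqref{eq:phitildephi}, with the $r-s$ zero parameters contributing $(0;q)_{\infty}=1$), and telescope with $(x;q)_{\infty}=(x;q)_k(xq^k;q)_{\infty}$. Your added observations — that the first ${}_s\phi_{r-1}$ factor is entire since $s<r$, so no $|W|<1$ hypothesis or $(W;q)_{p+1}$ factor arises, and that $(q;q)_{\infty}=(q;q)_{t-1}(q^t;q)_{\infty}$ produces the factor $(q^t;q)_{\infty}$ — are exactly the details the authors left to the reader.
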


\bigskip

\paragraph{{\bf Acknowledgements.}}  We thank anonymous referees for a number of useful suggestions that lead to a substantial improvement of this paper. Research of A.K. was supported by
 the Natural Sciences and Engineering Research Council of Canada.

\bigskip

\bibliographystyle{amsplain}

\begin{thebibliography}{10}

\bibitem{AblFok}
M.~J. Ablowitz and A.~S. Fokas, \emph{Complex variables: introduction and
  applications}, second ed., Cambridge Texts in Applied Mathematics, Cambridge
  University Press, Cambridge, 2003. \MR{1989049}

\bibitem{Bailey}
W.~N. Bailey, \emph{Generalized hypergeometric series}, Cambridge Tracts in
  Mathematics and Mathematical Physics, No. 32, Stechert-Hafner, Inc., New
  York, 1964. \MR{0185155}

\bibitem{BJ}
F.~Beukers and F.~Jouhet, \emph{Duality relations for hypergeometric series},
  B.London.Math.Soc. \textbf{47} (2015), no.~2, 343--358.

\bibitem{Ebisu}
A.~Ebisu, \emph{Three-term relations for the hypergeometric series}, Funkcialaj
  Ekvacioj \textbf{55} (2012), no.~2, 255--283. \MR{3012577}

\bibitem{Ernst}
T.~Ernst, \emph{A comprehensive treatment of q-calculus}, Birkh\"{a}user, 2012.

\bibitem{FengKuznYang}
R.~Feng, A.~Kuznetsov, and F.~Yang, \emph{A short proof of duality relations
  for hypergeometric functions}, J. Math. Anal. Appl. \textbf{443} (2016),
  no.~1, 116--122. \MR{3508482}

\bibitem{Gr}
G.~Gasper and M.~Rahman, \emph{Basic hypergeometric series}, second ed.,
  Encyclopedia of Mathematics and its Applications, vol.~96, Cambridge
  University Press, Cambridge, 2004, With a foreword by Richard Askey.
  \MR{2128719}

\bibitem{KacCheung}
V.~Kac and P.~Cheung, \emph{Quantum calculus}, Universitext, Springer-Verlag,
  New York, 2002. \MR{1865777}

\bibitem{KalmKarp}
S.~I. Kalmykov and D.~B. Karp, \emph{New identities for a sum of products of
  the {K}ummer functions}, Sib. \`Elektron. Mat. Izv. \textbf{15} (2018),
  267--276. \MR{3783350}

\bibitem{KKAMS}
D.B. Karp and A.~Kuznetsov, \emph{A new identity for a sum of products of the
  generalized hypergeometric functions}, AMS Proc. \textbf{149} (2021), no.~7,
  2861--2870.

\bibitem{Frontiers}
Editors: M.~Z. Nashed and X.~Li, \emph{Frontiers in orthogonal polynomials and
  q-series}, first ed., Contemporary Mathematics and Its Applications:
  Monographs, Expositions and Lecture Notes, vol.~1, World Scientific, 2018.

\bibitem{Slater}
L.~J. Slater, \emph{Generalized hypergeometric functions}, Cambridge University
  Press, Cambridge, 1966. \MR{0201688}

\bibitem{Suzuki}
Y.~Suzuki, \emph{Transformation formulas and three-term relations for basic
  hypergeometric series}, arXiv:1803.03110 (2018).

\bibitem{GITZ}
H.~Tagawa V.~J. W.~Guo, M.~Ishikawa and J.~Zeng, \emph{A quadratic formula for
  basic hypergeometric series related to {A}skey-{W}ilson polynomials}, AMS
  Proc. \textbf{143} (2015), no.~4. \MR{3314110}

\bibitem{Yamaguchi}
Y.~Yamaguchi, \emph{Three-term relations for basic hypergeometric series}, J.
  Math. Anal. Appl. \textbf{464} (2018), no.~1, 662--678. \MR{3794109}

\end{thebibliography}

\end{document}